\documentclass[11pt]{amsart}

\usepackage{amsmath, amsfonts, amssymb, amscd, amstext, amsthm}
\usepackage{latexsym}
\usepackage{mathrsfs}
\usepackage{mathscinet}

\usepackage{stmaryrd}
\usepackage[utf8]{inputenc}
\usepackage[all]{xy}
\SelectTips{cm}{}

\usepackage[dvipsnames]{xcolor}
\usepackage{tikz}

\usepackage{url}
\usepackage[margin=3cm]{geometry}

\usepackage{marginnote}
\long\def\@savemarbox#1#2{\global\setbox#1\vtop{\hsize\marginparwidth 
  \@parboxrestore\tiny\raggedright #2}}
 \usepackage[pagebackref,%colorlinks,linkcolor=black,citecolor=black%
    ,pdfborder={0 0 0}
    ,urlcolor=black,hypertexnames=false]{hyperref}
\hypersetup{pdfauthor={Lizhi Chen},pdftitle={Topological complexity and finite domination}}

\AtBeginDocument{%
   \def\MR#1{}
}

\makeatletter
\@namedef{subjclassname@2020}{\textup{2020} Mathematics Subject Classification}
\makeatother

\newtheorem{theorem}{Theorem}[section]
\newtheorem{lemma}[theorem]{Lemma}
\newtheorem{proposition}[theorem]{Proposition}

\theoremstyle{definition}
\newtheorem{definition}[theorem]{Definition}

\theoremstyle{remark}
\newtheorem{remark}[theorem]{Remark}

\numberwithin{equation}{section}

\newcommand{\ts}{\hspace{.11111em}}
\newcommand{\tts}{\hspace{.05555em}}
\newcommand{\Ncal}{\mathcal{N}}
\newcommand{\Ucal}{\mathcal{U}}
\newcommand{\Vcal}{\mathcal{V}}

\newcommand{\R}{\mathbb{R}}

\newcommand{\Z}{\mathbb{Z}}

\DeclareMathOperator{\dist}{\operatorname{\mathsf{dist}}\tts}

\DeclareMathOperator{\vol}{\operatorname{\mathsf{Vol}}\tts}
\DeclareMathOperator{\inj}{\operatorname{\mathsf{inj}}\tts} % Injectivity radius
\DeclareMathOperator{\emb}{\operatorname{\mathsf{Emb}}\tts} % Embolic volume 
\newcommand{\RP}{\mathbb{R}\mathbb{P}}

\DeclareMathOperator{\sys}{\operatorname{\mathsf{Sys}}\tts}

\DeclareMathOperator{\FillRad}{\operatorname{\mathsf{Fill} \ts \mathsf{ Rad}}\tts}

\DeclareMathOperator{\dens}{\operatorname{\mathsf{Density}}\tts}

\begin{document}
%\doublespacing 
% \linenumbers
 
\title{Topological Complexity and Finite Domination}
\author[L.~Chen]{Lizhi Chen}
\address{
\hspace*{0.055in}School of Mathematics and Statistics, Lanzhou University \newline
\hspace*{0.175in} Lanzhou 730000, P.R. China 
}

\email{\hspace*{0.025in} lizhi.chen.math@gmail.com}

\thanks{Supported by Youth Scientists Fund of NSFC (grant No. 11901261) and the Fundamental Research Funds for the Central Universities (grant No. lzujbky-2017-26) and NSFC (grant No. 12271225).}

\subjclass[2020]{Primary 53C23, Secondary 53C20.}

\keywords{Topological complexity, Embolic volume, Manifold domination, Filling radius}

\date{\today}

\begin{abstract} 
Let $M$ be a closed, connected, smooth $n$-dimensional manifold. We prove that $M$ is dominated by the underlying space of the $n$-skeleton of a finite simplicial complex. Furthermore, the total number of simplices in the $n$-skeleton is bounded above by a constant depending only on $n$ and the embolic volume of $M$. 
\end{abstract}

\maketitle

\section{Introduction}
Gromov's Betti number theorem gives an upper bound for the total Betti number of a closed Riemannian manifold in terms of a lower bound on its sectional curvature and an upper bound on its diameter. Weiss~\cite{Weiss1996} showed a stronger version of this theorem in terms of domination. In the study of systolic inequalities, Gromov also established a curvature-free upper bound for the total Betti number. In this note, we show that Gromov's curvature-free upper bound admits a generalization in terms of domination.

For compact Riemannian manifolds of sectional curvature bounded from below, Gromov's Betti number theorem states that the total Betti number is bounded above by a positive constant depending on the dimension, a lower bound on sectional curvature and the diameter, see \cite{Gromov1981} or \cite[Theorem A]{Weiss1996}. In \cite{Weiss1996}, Weiss modified Gromov's argument to show that the total Betti number in Gromov's theorem can be replaced by the number of cells in a finite dominating CW-complex.  
\begin{theorem}[\cite{Weiss1996}]
 Let $M$ be a closed, connected, n-dimensional Riemannian manifold, with sectional curvature bounded from below by $-\kappa^2$ ($\kappa > 0$). Then there exists a constant $\mathcal{C} = \mathcal{C}(n)$, depending only on $n$, such that the manifold $M$ is dominated by a CW-complex with number of cells at most
 \begin{equation*}
  \mathcal{C}^{1 + \kappa D} ,
 \end{equation*}
 where $D$ is the diameter of $M$. 
\end{theorem}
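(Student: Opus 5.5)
This is Weiss's theorem, which refines Gromov's Betti number argument. My plan is to run Gromov's covering and rescaling argument with critical point theory for distance functions, and to carry finite domination through each step in place of Betti number bookkeeping.

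\textbf{Step 1: normalize.} Rescale the metric so that $\kappa=1$; the product $\kappa D$ is scale invariant. The key input is Gromov's lemma on critical points of distance functions under a lower curvature bound. If $p\in M$, $r>0$, and $d_p$ has no critical points in the annulus $r<d_p<R$, then $B(p,r)\subset B(p,R)$ is a deformation retract up to homotopy. Moreover, by Toponogov comparison, critical points are scarce. There is a constant $\epsilon=\epsilon(n)$ such that for any $p$ and $r\le 1$ one can find $r'\in[\epsilon r, r]$ for which the inclusion $B(p,\epsilon r')\hookrightarrow B(p,r')$ factors, up to homotopy, through a ball with no critical points in between. This is Gromov's ``content'' induction: balls of small radius are ``compressible''.

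\textbf{Step 2: cover and build a nerve-like complex.} Take a maximal $\epsilon^{N}$-separated net in $M$, where $N=N(n)$ will be fixed later. By Bishop--Gromov, the number of points is at most $\mathcal{C}_1(n)^{1+D}$. The corresponding balls, and the balls obtained by enlarging radii through a chain of scales $\epsilon^{N}<\epsilon^{N-1}<\dots<1$, have two properties. First, they have bounded multiplicity. Second, for each intersection pattern at scale $\epsilon^{k}$, the inclusion into the corresponding pattern at scale $\epsilon^{k-1}$ is ``controlled''. Gromov's argument shows that a finite intersection of balls at one scale includes into the union at the next scale in a way that factors through something with bounded topology.

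Weiss's modification is to replace homology ranks by homotopy dominations. I would prove by downward induction on the depth $N-k$ a statement of the following form: every union of at most $m$ balls at scale $\epsilon^{k}$, included into the corresponding union at scale $\epsilon^{k-1}$, factors up to homotopy through a CW complex with at most $\mathcal{C}_k(n,m)$ cells. The induction step glues the dominations for individual intersections via a Mayer--Vietoris type homotopy pushout or mapping telescope construction. A homotopy colimit over the nerve of finitely many pieces, each dominated by a complex with boundedly many cells, is dominated by a complex whose cell count is bounded by the product of the piece bounds and the size of the nerve.

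\textbf{Step 3: global assembly.} Apply the induction at the top scale to the whole cover of $M$. This yields a factorization of $\mathrm{id}_M$ up to homotopy through a complex with at most $\mathcal{C}(n)\cdot\#\text{balls}\le \mathcal{C}^{1+D}$ cells. Here we use that at scale $\approx 1$, $M$ is a union of $\mathcal{C}_1^{1+D}$ balls of multiplicity bounded in $n$, and that the nerve has size linear in the number of balls. Such a factorization is exactly a domination.

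\textbf{Main obstacle.} The hardest step is the gluing in Step 2. Homotopy factorizations are not canonical, so assembling them over a cover requires compatibility of homotopies on overlaps. In homology, Mayer--Vietoris exactness gives this for free. I would first try to handle it by working with homotopy colimits indexed by the nerve, and by passing to a slightly larger scale at each step so that only the maps (not the homotopies) need to agree. I would also need to check that the cell count grows only by factors depending on $n$.
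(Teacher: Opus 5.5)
The paper does not prove this statement. It is quoted from Weiss as motivation only. The paper's own domination argument (Proposition~\ref{prop:domination}) works because every ball lies far below the injectivity radius, so the nerve maps back to $M$ by geodesic simplices; nothing like that is available under a lower curvature bound alone. Your sketch therefore has to stand on its own, and as written both of its load-bearing steps are missing.

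\textbf{The geometric induction has no base case.} You induct over a fixed ladder of scales $\epsilon^N<\cdots<1$, with $\epsilon,N$ depending only on $n$. At small scales, sectional curvature $\ge -1$ is essentially the scale-invariant condition of nonnegative curvature, so shrinking the scale buys nothing. Take a compact nonnegatively curved manifold $Y$ rescaled to diameter less than $\epsilon^N$. Then a single ball of radius $\epsilon^N$ is all of $Y$, and its inclusion into the ball of radius $\epsilon^{N-1}$ is $\mathrm{id}_Y$. So your base case for $Y$ is the theorem itself, in the hard regime $\kappa D\ll 1$.

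Step 1 does not fill this gap. A critical-point-free annulus only says that $B(p,\epsilon r')\hookrightarrow B(p,r')$ is a homotopy equivalence; it bounds the number of cells needed for neither ball.

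In Gromov's proof the induction parameter is not scale but the \emph{corank} of a ball $B(p,r)$. This is the length of a chain of critical points of $d_p$ outside the ball whose distances from $p$ grow by a fixed factor. Toponogov comparison forces the directions at $p$ toward such a chain to have pairwise angles not much smaller than $\pi/2$, so the corank is bounded in terms of $n$. The downward induction on corank is what terminates. Balls are compressed across critical-point-free annuli by the isotopy lemma; otherwise one passes to sub-balls of larger corank.

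Nor can you import ``Gromov's argument shows\dots'' as a black box. Gromov only bounds the rank of $H_*(B)\to H_*(\hat B)$ for concentric balls $B\subset\hat B$. A factorization of that inclusion through a complex with few cells has to be re-proved inside the corank induction, and that already needs the gluing lemma at every step.

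\textbf{The gluing lemma is left open.} You rightly call it the main obstacle, but the principle you state would not suffice even if proved. The intersections $X_\sigma$ are not dominated by small complexes; only the inclusions $X^j_\sigma\to X^{j+1}_\sigma$ between levels factor. For factorizations $X^j_\sigma\to K_\sigma\to X^{j+1}_\sigma$ nothing is automatic. If you define $K_\sigma\to K_\tau$ face by face, the composites $K_\sigma\to K_\tau\to K_\rho$ depend on the intermediate face $\tau$, so the $K_\sigma$ do not form a diagram. A merely homotopy-commutative diagram over a nerve of dimension $\ge 2$ does not induce a map of homotopy colimits without higher coherences.

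What is needed is the homotopy analogue of the nested levels in Gromov's Mayer--Vietoris spectral sequence argument:
\begin{enumerate}
\item Take a chain of covers $X^0\subset\cdots\subset X^m$, with $m$ exceeding the dimension of the nerve; that dimension is bounded via the multiplicity.
\item Build a complex $P$ skeleton by skeleton over the nerve from pieces $K_\sigma\times\Delta^{\dim\sigma}$. Higher-dimensional simplices use factorizations at \emph{lower} levels.
\item This ordering lets you define the attaching map on $K_\sigma\times\partial\Delta^{\dim\sigma}$ as $K_\sigma\to X^{j+1}_\sigma$ followed by the map already built on the lower skeleton of the level-$(j+1)$ homotopy colimit.
\item Finally, check that $X^0\to P\to X^m$ is homotopic to the inclusion.
\end{enumerate}
This is precisely the new ingredient in Weiss's paper. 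Without it, and without the corank induction, the proposal identifies where the difficulty lies rather than proving the theorem.
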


Two notions of \emph{domination} are commonly used. Following \cite[Definition 2.1]{Harpe2017}, an oriented connected closed $n$-dimensional manifold $M$ is dominated by another oriented connected closed $n$-dimensional manifold $N$ if there exists a continuous map $f: N \to M$ of nonzero degree. Domination in the sense of 
Whitehead is a different notion; see Whitehead \cite{Whitehead1948} or Hatcher \cite[Appendix]{Hatcher2002}. Throughout this paper, unless otherwise stated, we use the notion of domination in Whitehead's sense. 
\begin{definition}[Whitehead domination]
 A topological space $X$ is said to be dominated by a CW-complex $Y$ if there exist continuous maps $r: X \to Y$ and $s: Y \to X$, such that $s \circ r: X \to X$ is homotopic to the identity map. 
\end{definition}

Let $M$ be an $n$-dimensional manifold. 
For a Riemannian metric $g$ defined on $M$, denote by $\vol_{g}(M)$ the Riemannian volume, and by $\inj(M, g)$ the injectivity radius. The \emph{embolic volume} of $M$, denoted $\emb(M)$, is defined by
\begin{equation*}
 \inf_{g} \frac{\vol_{g}(M)}{\inj(M, g)^n},
\end{equation*}
where the infimum is taken over all Riemannian metrics $g$ on $M$. Berger's embolic inequality (see (\ref{Berger_emb}) in the following) implies that $\emb(M)$ is positive if $M$ is compact. The main result of this paper is as follows.

\begin{theorem} \label{thm_main}
 Let $M$ be a closed, connected, and smooth $n$-dimensional manifold. There exist positive constants $C_n$ and $C_n^{\prime}$, depending only on $n$, such that $M$ is dominated by the underlying space of the $n$-skeleton $\Ncal^{(n)}$  
 of a finite simplicial complex $\Ncal$. Furthermore, the total number of simplices in the $n$-skeleton $\Ncal^{(n)}$ is at most
 \begin{equation} \label{bound_emb}
  C_n ( 1 + \emb(M) )^{n+1} \exp{ \left( C_n^{\prime} \sqrt{\log{(1+\emb(M))} } \right) }. 
 \end{equation}
\end{theorem}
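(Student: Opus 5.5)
Fix a Riemannian metric $g$ on $M$, rescaled so that $\inj(M,g)=1$ and $\vol_g(M)\le 2\,\emb(M)$ (or $\le \emb(M)+1$), so that the number $\vol_g(M)$ is controlled by $\emb(M)$. The plan is to build $\Ncal$ as the nerve of a cover of $M$ by small metric balls, obtain a map $r\colon M\to\Ncal$ from a partition of unity, and construct $s\colon\Ncal^{(n)}\to M$ skeleton by skeleton. The bound on the number of simplices should come from the volume of balls of radius smaller than the injectivity radius; Croke's local embolic inequality gives $\vol B(x,\rho)\ge c_n\rho^n$ for $\rho\le 1/2$.

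\textbf{Step 1 (cover).} Choose a maximal $\varepsilon$-separated set $\{x_i\}$ in $M$, where $\varepsilon$ is a small fraction of the injectivity radius, for instance $\varepsilon=1/10$. The balls $B(x_i,\varepsilon/2)$ are disjoint, so Croke's inequality bounds the number of points by $C\,\vol_g(M)\le C(1+\emb(M))$. Let $\Ucal=\{B(x_i,\varepsilon)\}$ and let $\Ncal$ be its nerve. By Bishop--Gromov-free counting, again using Croke's bound together with the volume of $M$, the number of $k$-simplices through a vertex is bounded by the number of points within distance $2\varepsilon$. A crude bound multiplies $(1+\emb)$ by the number of $n$-subsets of neighbours. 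This crude count is too wasteful, which leads to Step 4.

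\textbf{Step 2 (maps).} Let $r\colon M\to\Ncal$ be the nerve map of a partition of unity subordinate to $\Ucal$. Its image lies in the subcomplex spanned by overlaps. Since $M$ has dimension $n$, a cellular or simplicial approximation deforms $r$ into $\Ncal^{(n)}$. To define $s$, send each vertex to $x_i$ and each edge to the minimizing geodesic. Extend over higher simplices by geodesic coning (center of mass): all vertices of a simplex lie in a ball of radius $2\varepsilon<\inj/2$, which is strongly convex after shrinking $\varepsilon$ if needed. Convexity radius can be a problem without curvature bounds, so as an alternative I would use that balls of radius less than $\inj$ are contractible and extend inductively over skeleta, keeping each image of a simplex inside $B(x_{i_0},C\varepsilon)$. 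Then $s\circ r(x)$ stays within $C\varepsilon<\inj$ of $x$, and a geodesic homotopy gives $s\circ r\simeq \mathrm{id}$.

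\textbf{Step 3 (need for contractibility).} The inductive extension requires that the relevant balls be contractible within slightly larger balls. This holds for radius below $\inj$ because the exponential map is a diffeomorphism there, and radial contraction stays in the ball.

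\textbf{Step 4 (main obstacle: the count).} The factor $(1+\emb)^{n+1}\exp(C'\sqrt{\log(1+\emb)})$ suggests that the local multiplicity must be $\exp(O(\sqrt{\log}))$ rather than a power, and that the scale $\varepsilon$ should be chosen adaptively. I would follow Gromov's curvature-free Betti number argument: pick radii $\rho$ in a range $[\rho_0,1/2]$ by a pigeonhole over roughly $\sqrt{\log(1+\emb)}$ dyadic scales, choosing a scale where the ball-volume growth $\vol B(x,5\rho)/\vol B(x,\rho)$ is bounded by $\exp(C\sqrt{\log})$ for a good cover. Then the multiplicity of the cover, and hence the number of simplices of dimension at most $n$ containing a given vertex, is at most $\exp(C'\sqrt{\log(1+\emb)})$. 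With at most $C(1+\emb)^{n+1}$ balls, coming from covering number bounds at the smallest scale, summing over vertices gives the bound (\ref{bound_emb}). Making this good-scale selection uniform, and compatible with the domination construction in Step 2, is the hardest part. I would try to reduce it to the argument in Gromov's Betti number theorem, using Croke's bound in place of curvature.
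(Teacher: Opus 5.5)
\textbf{Domination part.} Your Steps 2--3 are essentially the paper's construction. Vertices go to ball centers, and each simplex of the $n$-skeleton is filled by coning along minimizing geodesics from one vertex; this needs only distances below $\inj(M,g)$, not convexity. The map $r$ is a simplicial map from a triangulation of $M$ whose closed vertex stars lie in cover elements, and a geodesic homotopy joins $s\circ r$ to the identity. Two adjustments are needed:
\begin{itemize}
\item Take $\varepsilon$ of order $\inj(M,g)/n$ rather than $1/10$. The image of a $k$-simplex can drift about $2k\varepsilon$ from a vertex, and you need $(2n+1)\varepsilon<\inj(M,g)$.
\item Use that simplicial map rather than an unspecified cellular approximation of the partition-of-unity map. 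The estimate $\mathrm{dist}(x,s(r(x)))<\inj(M,g)$ needs $r(x)$ to lie in a simplex all of whose balls contain $x$.
\end{itemize}

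\textbf{The gap is in Step 4, and you do not need to close it.} As written, Step 4 does not work: Gromov's argument gives no per-vertex bound of the form $\exp(C'\sqrt{\log(1+\emb(M))})$. The number $T_j$ of later balls meeting $B(p_j,2R_j)$ is bounded only by a quantity proportional to $\vol B(p_j,5R_j)\le 5^{n+\theta}\vol B(p_j,R_j)$. A bound appears only after summing, using $\sum_j\vol B(p_j,R_j)\le\vol_g(M)$, and it bounds the total number $T=\sum_j T_j$ of edges, not the count at each vertex.

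Meanwhile, your dismissal of the crude count in Step 1 is mistaken. With $N\le C_n\,\emb(M)$ vertices, the $n$-skeleton of the nerve has at most
\[ \sum_{k=0}^{n}\binom{N}{k+1}\le (n+1)N^{n+1}\le C_n\,(1+\emb(M))^{n+1} \]
simplices. This is already below the stated bound, because the factor $\exp(C_n'\sqrt{\log(1+\emb(M))})$ is at least $1$. So Step 1 plus Steps 2--3 is a complete proof; drop Step 4.

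\textbf{Comparison with the paper.} Your route to the count is genuinely simpler. The paper covers $M$ by $\theta$-admissible balls and uses Gromov's argument to get $t_1\le C_n\emb(M)\exp(C_n'\sqrt{\log(1+\emb(M))})$ edges. It then applies exactly your crude estimate, through $N\le t_1+1$ and $\sum_k t_k\le (n+1)(t_1+1)^{n+1}$.

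The adaptive radii do buy a nearly linear bound on the number of edges. A uniform cover does not give this without curvature assumptions; there one only has $t_1\le\binom{N}{2}$. But once $t_1$ is raised to the power $n+1$, that advantage is lost, and the $\exp(\sqrt{\log})$ factor in the theorem is an artifact of passing through $t_1$. Your uniform cover gives the slightly stronger bound $C_n(1+\emb(M))^{n+1}$.
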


In \cite[Section 6.4.C.]{Gromov1983}, Gromov proved that the total Betti number of a closed aspherical manifold is bounded above by a positive constant depending only on its systolic volume and dimension. The same result holds for embolic volume, since embolic volume is an upper bound for systolic volume. 
Theorem~\ref{thm_main} strengthens this conclusion by controlling the size of a finite simplicial complex dominating the manifold, in analogy with Weiss's theorem in \cite{Weiss1996}.

The quantitative bound (\ref{bound_emb}) in Theorem~\ref{thm_main} is obtained through Gromov's covering techniques. The same approach can be applied to establish the following bound on the homological complexity of closed essential manifolds. A closed $n$-dimensional manifold $M$ is \emph{essential} if there exists a continuous map $f: M \to K$ from $M$ to an aspherical topological space $K$, such that the image $f_{*}([M]) $ of the fundamental class $[M]$ is nontrivial in $H_{n}(K; G)$, with $G = \Z$ if $M$ is orientable, and $G = \Z_2$ if $M$ is non-orientable.  
 
\begin{theorem}
 \label{thm_02}
 Let $M$ be a closed, essential, smooth $n$-dimensional manifold. 
Then there exist a finite simplicial complex $\mathcal{N}$ and a continuous map $f: M \to |\mathcal{N}^{(n)}|$ from $M$ to the underlying space of the $n$-skeleton $\Ncal^{(n)}$, such that $f_{*}([M]) \neq 0$ in $H_n (|\mathcal{N}^{(n)}|; G)$, with $G=\Z$ if $M$ is orientable, and $G=\Z_2$ if $M$ is non-orientable. 
Moreover, the total number of simplices of $\mathcal{N}^{(n)}$ is at most 
  \begin{equation} \label{est_compact}
   C_n (1+\emb(M))^{n+1} \exp{ \left( C_n^{\prime} \sqrt{\log{(1+\emb(M))}} \right) } , 
  \end{equation}
 where $C_n$ and $C_n^{\prime}$ are positive constants depending only on $n$.  
\end{theorem}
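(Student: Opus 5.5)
Fix a metric $g$ on $M$ with $\vol_g(M)/\inj(M,g)^n \le 2\,\emb(M)$ and rescale so that $\inj(M,g)=1$; then $\vol_g(M)\le 2\emb(M)$. The plan is to build $\Ncal$ as the nerve of a covering of $M$ by small metric balls, chosen by Gromov's covering argument so that the number of simplices is controlled by the volume. The map $f$ will be a partition-of-unity map into the nerve, deformed into the $n$-skeleton. Essentialness is used only to certify that $f_*[M]\neq 0$.

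\emph{Step 1 (the covering).} I will pick a radius $R$ with $R\le 1/2$, chosen as described below. Take a maximal $R$-separated set $\{x_i\}$ in $M$ and let $\Ucal=\{B(x_i,R)\}$, with nerve $\Ncal$.
\begin{itemize}
\item Since $R\le\inj/2$, Croke's local inequality gives $\vol B(x,r)\ge c_n r^n$ for $r\le 1/2$. Hence the number of vertices is at most $\vol(M)/(c_n (R/2)^n)$.
\item The number of $k$-simplices through a vertex is bounded by the number of centres in $B(x_i,2R)$. By the same packing argument this is at most $\vol B(x_i,3R)/\vol$ of disjoint small balls.
\end{itemize}
A naive count gives a bound exponential in the local multiplicity, which is uncontrolled. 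To get the stated bound, I will follow Gromov's ``good radius'' trick (as in \cite[Section 6.4]{Gromov1983}): apply a pigeonhole argument over dyadic scales $R_j=2^{-j}$ with $j\le J$. This finds a scale where the volume ratio $\vol B(x,10R)/\vol B(x,R)$ is bounded for most balls. Choosing $J\approx\sqrt{\log(1+\emb)}$ is what I expect to produce the factor $\exp(C'_n\sqrt{\log(1+\emb(M))})$. The $n$-skeleton has simplices of dimension at most $n$, so its number of simplices is at most (number of vertices)$\times$(local degree)$^{n}$. This yields the power $(1+\emb)^{n+1}$ after optimizing $R$. Getting these exact exponents is the main obstacle. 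I would first try to bound the degree by a constant times $(\vol B(x,3R)/R^n)$ and balance this against the vertex count.

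\emph{Step 2 (the map).} Let $\phi:M\to|\Ncal|$ be the nerve map given by a partition of unity subordinate to $\Ucal$. Since $\dim M=n$, cellular approximation homotopes $\phi$ into $|\Ncal^{(n)}|$; call the result $f$.

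\emph{Step 3 (nonvanishing).} Let $h:M\to K$ be the essential map, with $K$ aspherical; we may take $K=K(\pi_1M,1)$ and $h$ classifying. Because $R<\inj/2$, every nonempty finite intersection of balls is contained in a ball of radius $<\inj$ and so is contractible, hence simply connected. The standard nerve construction then gives a map $\psi:|\Ncal|\to K$ with $\psi\circ\phi\simeq h$: extend over simplices inductively, using $\pi_1$-triviality of the intersections on the $2$-skeleton and asphericity of $K$ above. Therefore
\[
\psi_*\bigl(f_*[M]\bigr)=h_*[M]\neq 0,
\]
since $f\simeq\phi$ in $|\Ncal|$. The class $f_*[M]$ in $H_n(|\Ncal^{(n)}|;G)$ maps to $\phi_*[M]$ under inclusion into $|\Ncal|$, so $f_*[M]\neq 0$.

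In fact, since the balls are contractible, the covering is good, so $\phi$ is a homotopy equivalence by the nerve lemma. Step 3 therefore holds even without using essentialness. Nonetheless, I would keep the essential-manifold argument, because it also works for the larger radii that the counting in Step 1 may require, where the balls stop being contractible.
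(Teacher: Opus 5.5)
\paragraph{Step~1 does not prove the count.} You never establish the simplex bound. You call the exponents ``the main obstacle'' and sketch a pigeonhole over a single dyadic scale that is neither carried out nor needed. As described it would not even control the complex: a scale at which $\vol B(x,10R)/\vol B(x,R)$ is bounded for \emph{most} balls says nothing about simplices through the remaining balls.

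The missing observation is that the naive count, restricted to the $n$-skeleton, is polynomial rather than exponential. It already suffices.
\begin{itemize}
\item Fix $R=\inj(M,g)/8$. Your own packing bound gives at most $N\le 16^n\beta_n^{-1}\vol_g(M)/\inj(M,g)^n\le C_n\emb(M)$ vertices.
\item Any simplicial complex on $N$ vertices has at most $\sum_{k=0}^n\binom{N}{k+1}\le (n+1)N^{n+1}$ simplices of dimension $\le n$.
\item This gives $C_n(1+\emb(M))^{n+1}$ simplices, which is already below (\ref{est_compact}).
\end{itemize}

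The paper takes a different route. It uses Gromov's $\theta$-admissible balls, of variable radii at most $\inj/(8n+16)$, to bound the number $T$ of \emph{edges} by $C_n\emb(M)\exp(C_n'\sqrt{\log(1+\emb(M))})$. It then uses $N\le T+1$ and the same binomial estimate. The $\exp\sqrt{\log}$ factor in the statement comes from raising that near-linear edge bound to the power $n+1$. You do not need to reproduce it; a uniform-radius cover does better here.

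\paragraph{Step~3 needs repair, not rethinking.} The inference ``contained in a ball of radius $<\inj$, hence contractible, hence simply connected'' is a non sequitur. Balls of radius below $\inj/2$ need not be convex, so their intersections need not be contractible. For the same reason, your closing claim that the cover is good and the nerve lemma applies is unjustified.

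What the extension over the $2$-skeleton actually requires is that the edge loop of each $2$-simplex be null-homotopic in $M$. This does hold with $R=\inj/8$, as long as you choose the edge paths inside $U_i\cup U_j$. If $z$ lies in the common intersection, the three balls lie in $B(z,2R)$, which is contractible since $2R<\inj$. Asphericity of $K$ and the classification of maps into a $K(\pi,1)$ by $\pi_1$-homomorphisms then give $\psi\circ\phi\simeq h$.

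The paper instead obtains $\Phi_*[M]\neq 0$ from Lemma~\ref{Guth} together with $\sys\pi_1\le 6\,\FillRad$ (Proposition~\ref{nerve_homology}).

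Your last sentence is backwards. Your $K(\pi,1)$-argument is precisely the part that breaks once the radii get large. You are right that essentialness is superfluous for this statement, but the correct reason is the argument of Proposition~\ref{prop:domination}, applied with radii at most $\inj/(8n+16)$. It gives $\psi\circ f\simeq\mathrm{id}_M$, which forces $f_*[M]\neq 0$ for every closed $M$.
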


Let $(M, g)$ be an $n$-dimensional Riemannian manifold. 
Berger's embolic inequality states that
\begin{equation} \label{Berger_emb}
 \inj(M, g)^n \leq c(n) \vol_{g}(M),
\end{equation}
where $c(n)$ is a constant depending only on $n$ (see \cite[Section 7.2.4.]{Berger2003} and the references therein). For a compact manifold $M$, the reciprocal $1/\emb(M)$ is the optimal constant in the corresponding embolic inequality over all Riemannian metrics on $M$. 
Determining exact embolic volumes is generally an open problem. Based on Croke's work, we know that the smallest $c(n)$ is equal to the reciprocal of the embolic volume of the $n$-sphere $S^n$,
\begin{equation*}
 \emb(M) \geq \emb(S^n) .
\end{equation*}
Moreover, $\emb(S^n)$ is known to be equal to $\frac{\sigma_n}{\pi^n}$, where $\sigma_n$ is the volume of the standard Euclidean $n$-sphere. According to \cite[Section 7.2.4.1, p. 357]{Berger2003}, the $n$-sphere $S^n$ is the only compact manifold for which the precise embolic volume is known. 
In \cite[Proposition 14]{Croke1980} (also see Berger~\cite[Theorem 149, p. 355]{Berger2003}), Croke proved a local embolic inequality: if $0 < R \leq \frac{1}{2} \inj(M, g) $, then 
\begin{equation}\label{Croke}
 \vol_{g} ( B(p, R) ) \geq \beta_n R^n
\end{equation}
holds for every ball $B(p, R)$ centered at $p$ with radius $R$ in a compact $n$-dimensional Riemannian manifold $(M, g)$, where $\beta_n$ is a constant depending only on $n$. A non-optimal value for $\beta_n$ is given by Croke in \cite{Croke1980} (also see \cite[Section 7.2.4.]{Berger2003}).

 \bigskip

\noindent \textbf{Acknowledgements.} The author is grateful to Hengyu Zhou for helpful discussions and to the anonymous reviewers for their valuable comments and suggestions.

\section{Nerve and filling radius}
\label{sec:nerve_FillRad}

Let $X$ be a topological space, and let $\Ucal$ be an open cover of $X$.
\begin{definition}
 The nerve of $\Ucal$, denoted by $\Ncal (\Ucal)$, is the simplicial complex defined as follows:
 \begin{enumerate}
  \item For each $U \in \Ucal$, there is a corresponding vertex $u$; 
  \item The vertices $u_{i_0}, u_{i_1}, \cdots , u_{i_k}$ span a $k$-simplex if and only if $U_{i_0} \cap U_{i_1} \cap \cdots \cap U_{i_k} \neq \emptyset$.
 \end{enumerate}
\end{definition}

We denote by $| \Ncal (\Ucal) |$ the underlying polyhedron of the nerve $\Ncal (\Ucal)$. 
Suppose that $\{ \varphi_U : X \to [0, 1] \, | \, U \in \Ucal \}$
is a locally finite partition of unity subordinate to $\Ucal$. The associated \emph{Alexandrov map} $ \Phi: X \to | \Ncal (\Ucal) |  $ is defined by 
\[ \Phi(x) = \sum_{U \in \Ucal} \varphi_{U} (x) \ts \ts u . \]
Since the partition of unity is locally finite, the sum is finite at each point. 
Moreover, $\Phi: X \to  | \Ncal (\Ucal) | $ is continuous. We refer to \cite[VIII, \textsection 5]{Dugundji1966}  for more details regarding the Alexandrov map.

Let $(M, g)$ be a closed Riemannian manifold, and let $\mathrm{dist}_g$ denote the distance function induced by $g$. 
Let $L^{\infty}(M)$ be the space of all bounded Borel functions on $M$. On $L^{\infty}(M)$, we define the supremum norm,
 \begin{equation*}
  \| f \|_{L^{\infty}} = \sup_{x\in M} | f(x) | . 
 \end{equation*}
The metric space $(M, \mathrm{dist}_{g})$ is embedded into $L^{\infty}(M)$ by the Kuratowski embedding $K: M \to L^{\infty}(M)$, which is defined by
\begin{equation*}
 K(x)(y) = \mathrm{dist}_{g}(x, y) 
\end{equation*}
for all $x, y \in M$.

Let $G$ be the coefficient group for the homology. The fundamental class  $[M]$ is the generator of $H_n(M; G)$, with $G = \Z$ if $M$ is orientable, and $G = \Z_2$ if $M$ is non-orientable. Denote by $N_r(K(M))$ the $r$-neighborhood of $K(M)$ in $L^{\infty}(M)$.

\begin{definition}
The \emph{filling radius} of $(M, g)$, denoted $\FillRad(M, g)$, is defined to be the infimum of $r > 0$ such that the image $K_{*}([M])$ of the fundamental class vanishes in $ H_n( N_r ( K(M) ) ; G)$.
\end{definition}

Gromov \cite{Gromov1983} proved systolic inequalities on closed essential manifolds using filling radius. 
\begin{definition}
 A closed $n$-dimensional manifold $M$ is \emph{essential} if there exists a continuous map $f: M \to X$ from $M$ to an aspherical topological space $X$, such that $f_{*}([M]) \neq 0$ in $H_n(X; G)$. Here the coefficient group $G=\Z$ if $M$ is orientable, and $G=\Z_2$ if $M$ is non-orientable.
\end{definition}

\begin{remark}
 All aspherical closed $n$-manifolds are essential. Moreover, $n$-dimensional projective space $\RP^n$ is also essential. 
\end{remark}

In a Riemannian manifold $(M, g)$, the homotopy 1-systole $\sys \pi_1(M, g)$ is defined to be the length of a shortest non-contractible closed curve. By definition, the homotopy 1-systole is related to the injectivity radius by $2 \inj(M, g) \leq \sys \pi_1(M, g)$.    

\begin{theorem}[{Gromov~\cite[Lemma 1.2.B.]{Gromov1983}}, also see {\cite[Theorem 3.3.]{crokeK2002_universal}}] \label{thm:FillRad_Sys}
Let $(M, g)$ be a closed essential Riemannian manifold. Then 
  \begin{equation*}
   \sys \pi_1(M, g) \leq 6 \, \FillRad(M, g).
  \end{equation*}
\end{theorem}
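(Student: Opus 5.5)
The plan is to argue by contradiction. Suppose that $\FillRad(M,g) < r < \frac{1}{6}\sys \pi_1(M,g)$. By the definition of filling radius, $K_*([M])=0$ in $H_n(N_r(K(M));G)$. Hence there is a finite singular $(n+1)$-chain $c$ in $N_r(K(M))$ with $\partial c = K_*(z)$, where $z$ is a cycle representing $[M]$. After barycentric subdivision we may assume two things: every simplex of $c$ has diameter less than a small $\varepsilon$ in $L^\infty(M)$, and every simplex of $z$ has diameter less than $\varepsilon$ in $M$. Let $f: M\to X$ be the map to the aspherical space given by essentiality.

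We will build a map $F$ from the simplicial complex $P$ underlying $c$ into $X$ skeleton by skeleton, with the goal that $F$ restricted to $\partial c$ is homotopic to $f$. This gives the contradiction $f_*([M]) = F_*(\partial c) = \partial F_*(c) = 0$ in $H_n(X;G)$.

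\emph{Vertices.} Each vertex $v$ of $c$ is a point of $L^\infty(M)$ within distance $r$ of $K(M)$. We send $v$ to a point $p_v\in M$ with $\|v - K(p_v)\| < r$. On boundary vertices, which already lie in $K(M)$, we take $p_v$ to be the vertex itself.

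\emph{Edges.} Since $K$ is an isometric embedding, adjacent vertices $v,w$ satisfy
\begin{equation*}
\dist_g(p_v,p_w) < 2r+\varepsilon .
\end{equation*}
We send the edge $vw$ to a minimizing geodesic from $p_v$ to $p_w$. On boundary edges this is an $\varepsilon$-short geodesic close to the original edge of $z$.

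\emph{$2$-simplices.} The image of the boundary of each $2$-simplex is a closed curve in $M$ of length less than $6r+3\varepsilon < \sys\pi_1(M,g)$. It is therefore contractible, so the map extends over the $2$-skeleton. This is precisely where the constant $6$ enters.

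\emph{Higher skeleta.} We now compose with $f$ and work in $X$. Because $\pi_k(X)=0$ for $k\ge 2$, the map extends over all higher skeleta of $P$, giving $F: |P|\to X$.

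\emph{Comparison on the boundary.} It remains to compare $F|_{\partial c}$ with $f$ on the cycle $z$. We would treat the prism $|z|\times[0,1]$ with the same skeleton-by-skeleton scheme. Vertex paths are constant. A vertical square has boundary consisting of an original $\varepsilon$-short edge of $z$, the geodesic replacing it, and two constant sides, so its length is at most $2\varepsilon$ and it is contractible. Higher cells again extend by asphericity of $X$. This yields a homotopy between $f|_{z}$ and $F|_{\partial c}$ that is compatible on faces, and hence the identity $f_*([M]) = F_*(\partial c)$ used in the contradiction above.

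\textbf{Main obstacle.} The main point needing care is the bookkeeping that makes the constructions simplex-compatible. Each choice must depend only on the simplex, so that faces shared by several simplices of $c$ and $z$ receive the same map. In particular, the boundary-vertex choice $p_v=v$ must be consistent with the prism homotopy. The length estimate $3(2r+\varepsilon)<\sys\pi_1$ is then the only metric input.
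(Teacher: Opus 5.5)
The paper gives no proof of this theorem; it quotes it from Gromov and Croke--Katz. Your argument is essentially the standard proof found there: push the vertices of a fine filling chain to nearby points of $M$, send edges to minimizing geodesics of length $<2r+\varepsilon$, and fill each $2$-simplex because its boundary loop has length $<6r+3\varepsilon<\sys\pi_1(M,g)$. Then extend over the higher skeleta using asphericity of $X$.

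The one place you deviate is the boundary. The usual argument (Gromov's ``retraction of $P^{(2)}\cup M$ onto $M$'') leaves the boundary simplices, which already lie in $K(M)$, mapped by $K^{-1}$ and extends relative to that subcomplex. Then $F$ restricts to $f$ on the nose and no comparison is needed. You instead straighten the boundary edges to geodesics as well, and repair this with a prism homotopy over $|z|\times[0,1]$. That route is a little longer but valid. It rests on $K$ being injective, so that the subcomplex of $P$ carrying $\partial c$ really is a copy of $|z|$. It has the mild advantage that every edge entering the $6r+3\varepsilon$ count is a geodesic.

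One inaccuracy needs fixing. For the vertical squares you claim the boundary has length at most $2\varepsilon$, but you only arranged that simplices of $z$ have \emph{diameter} $<\varepsilon$, and a singular edge need not be rectifiable. There are two easy repairs. You can take $z$ to be the fundamental cycle of a smooth triangulation, so that subdivided edges have small length. Or you can note that the loop lies in $B(p_v,\varepsilon)$, which is contractible once $\varepsilon<\inj(M,g)$. The standard proof needs the same remark for the original boundary edges that appear in its $6r+3\varepsilon$ count.
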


The following lemma is a simplicial version of \cite[Lemma 6]{Guth2011}. 

\begin{lemma} \label{Guth}
 Let $\Vcal = \{ B(p_j, R_j) \}_{j=1}^{N}$ be an open cover of
 a closed Riemannian manifold $M$ by metric balls,  
  where $R_j \leq R_0$ and $R_0 > 0$. 
If the Alexandrov map $\Phi: M \to | \Ncal (\Vcal) | $ satisfies ${\Phi}_{*}([M]) = 0$, then the filling radius of $M$ must be at most $R_0$.  
\end{lemma}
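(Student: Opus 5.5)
We will construct a map from the nerve back into a neighborhood of the Kuratowski image, homotopic to $K$ after composing with $\Phi$. Then $\Phi_*([M])=0$ forces $K_*([M])=0$ in $H_n(N_r(K(M));G)$ for every $r>R_0$. This is essentially Gromov's (and Guth's) argument: extend a map on vertices linearly over simplices, using the convexity of $L^\infty(M)$.

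\emph{Step 1 (map on the nerve).} Define $\Psi: |\Ncal(\Vcal)| \to L^\infty(M)$ on vertices by $\Psi(v_j)=K(p_j)$, and extend affinely on each simplex. This is well defined and continuous because $L^\infty(M)$ is a vector space.

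\emph{Step 2 (image lies in a neighborhood).} Let $v_{j_0},\dots,v_{j_k}$ span a simplex. Then the balls $B(p_{j_i},R_{j_i})$ share a point $x$, so $\dist_g(p_{j_i},x)<R_{j_i}\le R_0$. Since $K$ is an isometric embedding for the sup norm, $\|K(p_{j_i})-K(x)\|_{L^\infty}<R_0$ for every $i$. The open ball of radius $R_0$ about $K(x)$ is convex, so every convex combination of the $K(p_{j_i})$ lies within distance $<R_0$ of $K(x)$. Hence $\Psi(|\Ncal(\Vcal)|)\subset N_{R_0}(K(M))$.

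\emph{Step 3 (homotopy).} Take the straight-line homotopy $H_t(x)=(1-t)K(x)+t\,\Psi(\Phi(x))$. Here $\Phi(x)=\sum_j\varphi_j(x)v_j$ with $\varphi_j(x)>0$ only if $x\in B(p_j,R_j)$. So $\Psi\Phi(x)=\sum_j\varphi_j(x)K(p_j)$ is a convex combination of points within distance $<R_0$ of $K(x)$. By convexity, the whole segment from $K(x)$ to $\Psi\Phi(x)$ stays in the open $R_0$-ball about $K(x)$. Therefore $H$ is a homotopy inside $N_{R_0}(K(M))$ from $K$ to $\Psi\circ\Phi$.

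\emph{Step 4 (conclusion).} In $H_n(N_{R_0}(K(M));G)$ we get $K_*([M])=\Psi_*\Phi_*([M])=0$. Since $N_{R_0}\subset N_r$ for all $r\ge R_0$, the class also vanishes in $N_r(K(M))$. Thus $\FillRad(M,g)\le R_0$.

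The only delicate point is the strictness of the neighborhood, i.e.\ whether the image lies in an open $R_0$-neighborhood, and the continuity of $\Psi$ on an infinite-dimensional target. Both are handled by the strict inequalities for open balls and by the finiteness of the nerve. The main obstacle is therefore just checking that $\Psi\circ\Phi$ lands in the same convex ball as $K(x)$, which Step 3 does pointwise. The essential ingredient is that $K$ is distance-preserving in the sup norm, which follows from the triangle inequality.
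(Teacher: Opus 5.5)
Your proposal is correct and is essentially the paper's own proof. Both extend $u_j \mapsto K(p_j)$ affinely to $\Psi$, use the isometry of $K$ together with convexity of sup-norm balls to place $\Psi(|\Ncal(\Vcal)|)$ and the segments from $K(x)$ to $\Psi(\Phi(x))$ inside $N_{R_0}(K(M))$, and conclude that $K_*([M]) = \Psi_*\Phi_*([M]) = 0$ via the straight-line homotopy.
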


In \cite{Guth2011}, Guth uses the rectangular nerve rather than the simplicial nerve. The proof of Lemma \ref{Guth} is obtained by adapting Guth's argument in our setting. 

\begin{proof}
Using the Kuratowski embedding $K: M \to L^{\infty}(M)$, we define an affine map
 \[ \Psi \colon |\Ncal(\Vcal)| \longrightarrow L^{\infty}(M) \]
by sending each vertex $u_j \in \Ncal(\Vcal)$ to $K(p_j)$, and extending $\Psi$ simplicially on $|\Ncal(\Vcal)|$. If $x = \sum_{i=1}^m t_i u_{j_i}$ lies in a simplex, choose
 \[ q \in \bigcap_{i=1}^m B(p_{j_i}, R_{j_i}). \]
Then
 \begin{align*}
   \| \Psi(x) - K(q) \|_{\infty} = & \left\| \sum_{i=1}^m t_i K(p_{j_i}) - K(q) \right\|_{\infty}  \\
     \leq & \sum_{i=1}^m t_i \left\| K(p_{j_i}) - K(q) \right\|_{\infty} \\  
     = & \sum_{i=1}^m  t_i \dist_g(p_{j_i}, q) \\
    < & \sum_{i=1}^m  t_i R_{j_i} \\ 
    \leq & R_0. 
 \end{align*}
Thus $\Psi(|\Ncal(\Vcal)|) \subseteq N_{R_0}(K(M))$. Similarly, for every $p \in M$, $ \| \Psi(\Phi(p)) - K(p) \|_{\infty} < R_0$, 
so $\Psi \circ \Phi$ and $K$ are joined by a straight-line homotopy inside the neighborhood $N_{R_0}(K(M))$. If $\Phi_{*}([M]) = 0$, then $K_{*}([M]) = 0$ in $H_n(N_{R_0}(K(M)); G)$. This proves the lemma.  
\end{proof}

The following proposition is used in the proof of Theorem~\ref{thm_02}.

\begin{proposition}\label{nerve_homology}
 Let $(M, g)$ be a closed essential $n$-dimensional Riemannian manifold. 
For a cover $\Vcal = \{ B(p_j, R_j) \}_{j=1}^N$ of open metric balls, let the radii $R_j$ be at most $\frac{1}{8} \inj (M, g)$. Then the Alexandrov map $\Phi: M \to | \Ncal (\Vcal) |$ satisfies $\Phi_{*} ( [M] ) \neq 0 $. 
\end{proposition}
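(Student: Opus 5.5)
This is a proof by contradiction that combines Lemma~\ref{Guth} with Theorem~\ref{thm:FillRad_Sys}. Suppose $\Phi_*([M]) = 0$ in $H_n(|\Ncal(\Vcal)|;G)$. All radii satisfy $R_j \le R_0 := \frac{1}{8}\inj(M,g)$, so Lemma~\ref{Guth} applies and gives
\[
\FillRad(M,g) \le \tfrac{1}{8}\inj(M,g).
\]
Since $M$ is closed and essential, Theorem~\ref{thm:FillRad_Sys} gives
\[
\sys\pi_1(M,g) \le 6\,\FillRad(M,g).
\]
The paper also notes that $2\inj(M,g) \le \sys\pi_1(M,g)$. Chaining the three inequalities yields
\[
2\inj(M,g) \le \sys\pi_1(M,g) \le 6\,\FillRad(M,g) \le \tfrac{6}{8}\inj(M,g) = \tfrac34 \inj(M,g).
\]
Because $M$ is compact, $\inj(M,g) > 0$, so this is a contradiction. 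Hence $\Phi_*([M]) \neq 0$.

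Two points need checking. First, $\sys\pi_1$ must be finite for the chain to make sense, which requires a non-contractible closed curve. An essential manifold has infinite fundamental group: if $\pi_1$ were trivial, the map $f: M \to K$ would lift to the contractible universal cover of $K$ and $f_*[M]$ would vanish. More generally, $f_*[M]$ is a class in the group homology of the image of $\pi_1(M)$, and such classes vanish in degree $n \ge 1$ when that image is trivial. So the systole is a genuine finite number. If one prefers, it suffices to cite that Gromov's inequality is stated for essential manifolds, where the systole is finite.

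Second, one should confirm that the filling-radius conclusion of Lemma~\ref{Guth} really uses the same coefficient group $G$ as Theorem~\ref{thm:FillRad_Sys}. Both are phrased with $G=\Z$ or $\Z_2$ according to orientability, so they match.

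I expect no serious obstacle. The only care needed is the strictness bookkeeping: Lemma~\ref{Guth} gives a non-strict bound, but the constant $\frac18$ leaves ample room, since even $\frac13$ would suffice. The factor $\frac18$ presumably carries extra slack needed elsewhere in the paper.
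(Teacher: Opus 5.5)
Your proof is correct and matches the paper's: assuming $\Phi_*([M])=0$, Lemma~\ref{Guth} gives $\FillRad(M,g)\le\frac18\inj(M,g)$, and chaining this with $2\inj(M,g)\le\sys\pi_1(M,g)\le 6\FillRad(M,g)$ from Theorem~\ref{thm:FillRad_Sys} yields a contradiction. Two minor slips appear only in your side remarks: an essential manifold need only have a nontrivial (not infinite) fundamental group, since $\RP^n$ is essential with $\pi_1=\Z_2$, and nontriviality is all your finiteness argument uses; also, with the non-strict bound of Lemma~\ref{Guth}, the constant $\frac13$ itself would not give a contradiction, though any constant strictly smaller would.
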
 

\begin{proof}
 Suppose that $\Phi_{*}([M]) = 0$ in $H_n (|\Ncal (\Vcal)|; G)$. Here $G=\Z$ if $M$ is orientable, and $G = \Z_2$ if $M$ is non-orientable. 
 By Lemma~\ref{Guth}, $\FillRad(M, g) \leq \frac{1}{8} \inj(M, g)$. On the other hand, Theorem~\ref{thm:FillRad_Sys} yields that 
  \begin{equation*}
   \inj(M, g) \leq \frac{1}{2} \sys \pi_1 (M, g) \leq  3 \FillRad (M, g) \leq \frac{3}{8} \inj(M, g)  
 \end{equation*}
holds for every closed essential Riemannian manifold $(M, g)$, thus leading to a contradiction. 
\end{proof}

\section{Gromov's covering argument}
\label{sec:gromov_covering}

In this section, we introduce Gromov's covering technique, which first appeared in \cite[Section 5.3.]{Gromov1983}.
This section is an adaptation of Gromov's work. 
Moreover, some additional explanations are provided to clarify the original arguments. We also refer to \cite{Chen2019} for the application to Betti numbers.

\medskip

Let $\theta > 0$ and $R_0 > 0$.

\begin{definition}
A metric ball $B(p, R)$ in an $n$-dimensional Riemannian manifold $(M, g)$ is called \emph{$\theta$-admissible}
if either $R = R_0$ and $\vol_{g} \left( B(p, 5R_0) \right) \leq 5^{n + \theta} \vol_{g} ( B(p, R_0 ) )$, or $0 < R < R_0$ and the following two conditions hold:
 \begin{enumerate}
    \item[(1)] $ \vol_{g} ( B(p, 5R ) ) \leq 5^{n + \theta} \vol_{g} (B(p, R)), $
    \item[(2)] $ \vol_{g} ( B(p, 5R^{\prime}) ) > 5^{n + \theta} \vol_{g} ( B(p, R^{\prime}) ) $ whenever $R < R^{\prime} \leq R_0 $.
  \end{enumerate} 
\end{definition}   

In Proposition~\ref{prop:admissible_existence} of Appendix A, we show that at any point in a complete Riemannian manifold $M$, there exists a unique $\theta$-admissible ball.

For appropriately chosen positive constants $\theta$ and $R_0$, we use $\theta$-admissible metric balls to construct an open cover of the Riemannian manifold $(M, g)$. Throughout this section, the constant $R_0$ is chosen to be 
  \begin{equation} \label{equ:R_0}
   R_0 = \frac{1}{8n + 16} \, \inj(M, g).
  \end{equation} 
The constant $\theta$ is then defined as
 \begin{equation} \label{equ:theta}
  \theta = \sqrt{ \log_5{ \frac{\vol_{g}(M)}{\beta_n \ts R_0^{n}} } },
  \end{equation}
 where $\beta_n$ is the constant from Croke's local embolic inequality (\ref{Croke}). Then $\theta$ must be positive, since $\vol_g(M) > \vol_g(B(p, R_0)) \geq \beta_n R_0^n$ by (\ref{Croke}), and $B(p, R_0) \neq M$ according to our choice of $R_0$.

We next show that a cover of the manifold $M$ can be constructed using the $\theta$-admissible balls.  

\begin{lemma} \label{lem:covering_trick}
 Let $M$ be a closed connected Riemannian manifold. Then there exists a finite collection $\{ B(p_j, R_j) \}_{j=1}^N$ of pairwise disjoint $\theta$-admissible balls, indexed so that $R_1 \geq R_2 \geq \cdots \geq R_N$, such that 
  \[ \Ucal = \{ B(p_j, 2R_j) \}_{j=1}^N \]
 is an open cover of $M$.  
\end{lemma}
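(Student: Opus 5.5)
This is a Vitali-type greedy selection. The admissible radius at a point is a well-defined function of the point, and we select balls in decreasing order of radius.

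By Proposition~\ref{prop:admissible_existence}, for every $p\in M$ there is a unique $\theta$-admissible ball $B(p,R(p))$ with $0<R(p)\le R_0$. Before running the greedy step we need a uniform positive lower bound $r_*>0$ on $R(p)$ over $M$, since this is what makes the process terminate.

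To get $r_*$ we use condition (1) together with volume bounds. By Croke's inequality (\ref{Croke}), since $5R(p)\le 5R_0<\frac12\inj(M,g)$,
\[
\vol_g\bigl(B(p,5R(p))\bigr)\ge \beta_n\,5^nR(p)^n.
\]
We also have $\vol_g(B(p,R))\le \vol_g(M)$, but this alone is not enough. The better route is to iterate condition (2). For every $R'\in(R(p),R_0]$ we have $\vol(B(p,5R'))>5^{n+\theta}\vol(B(p,R'))$. Choose $k$ with $5^kR(p)\le R_0<5^{k+1}R(p)$ and chain the inequality along the radii $R_0,R_0/5,\dots,R_0/5^{k}$; these all lie above $R(p)$ except possibly the last, which we adjust. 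This gives
\[
\vol(M)\ge \vol(B(p,5R_0))>5^{k(n+\theta)}\vol(B(p,R_0/5^{k-1}))\ge 5^{k(n+\theta)}\beta_n (R_0/5^{k-1})^n,
\]
so $5^{k\theta}\lesssim \vol(M)/(\beta_nR_0^n)$. Hence $k$ is bounded and $R(p)\ge R_0 5^{-k-1}=:r_*>0$. The only thing actually needed here is that some uniform $r_*>0$ exists.

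The selection goes as follows. Pick $p_1$ with $R(p_1)$ maximal, or within a factor of the supremum if the supremum is not attained; attainment does not matter for the argument below. Inductively, among the points $p$ whose admissible ball $B(p,R(p))$ is disjoint from $B(p_1,R_1),\dots,B(p_{j-1},R_{j-1})$, choose $p_j$ with $R(p_j)$ as large as possible.

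Since all radii are at least $r_*$ and the chosen balls are pairwise disjoint, each has volume at least $\beta_n r_*^n$ by Croke. So the process stops after finitely many steps, at $N\le \vol(M)/(\beta_nr_*^n)$, and the radii are nonincreasing by construction.

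For the covering property, take any $x\in M$. Its ball $B(x,R(x))$ must meet some $B(p_j,R_j)$; otherwise $x$ would still be a candidate and the process would not have stopped. Let $j$ be the first such index. At step $j$ the point $x$ was still a candidate, so $R(x)\le R_j$ when the supremum is attained. Then
\[
\dist(x,p_j)<R(x)+R_j\le 2R_j,
\]
so $x\in B(p_j,2R_j)$.

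The main obstacle is attainment of the maximum, because $R(p)$ need not be continuous in $p$. If it is not attained, choosing $R_j$ within a factor $(1-\epsilon)$ of the supremum only yields $\dist(x,p_j)<(2+\epsilon')R_j$, which gives covering by slightly larger balls than $2R_j$.

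To get exactly $2R_j$, I would show that the radii take only finitely many values. Either reuse the bounded-$k$ argument above by showing that $R(p)$ is determined up to discrete choices, or argue semicontinuity: the volume of balls varies continuously in the center, so the set $\{p: R(p)\ge t\}$ is closed, since the defining conditions are closed inequalities in $p$ for fixed radii. Then on the compact remaining candidate set the supremum is attained at a limit point, using the strict inequality in (2) and the non-strict one in (1). This upper-semicontinuity check is where I expect the real work to lie.
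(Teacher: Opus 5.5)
Your uniform lower bound on $R$ (chaining condition (2) and applying Croke, essentially Lemma~\ref{lem:admissible}) is fine, and so is the finiteness count. The selection step, however, has a genuine gap. Your candidate set at step $j$,
\[
C_j=\{p\in M:\ \dist_g(p,p_i)\ge R(p)+R_i \ \text{for all } i<j\},
\]
is defined through $R(p)$ itself. $R$ is only upper semicontinuous, so it may jump up at a point. Hence $p\mapsto \dist_g(p,p_i)-R(p)$ is only lower semicontinuous, and its superlevel sets need not be closed, so $C_j$ need not be compact.

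Consequently your claim that the supremum is attained on the ``compact remaining candidate set'' fails. Take $q_m\in C_j$ with $R(q_m)\to s=\sup_{C_j}R$ and $q_m\to q$. Upper semicontinuity only gives $R(q)\ge s$. When $R(q)>s$, the ball $B(q,R(q))$ may meet some earlier $B(p_i,R_i)$, so $q\notin C_j$ and no candidate realizes $s$. Without attainment your covering step only yields radii $(2+\epsilon')R_j$, as you observed.

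Your other suggested fix is false. The bound on $k$ only confines $R(p)$ to finitely many intervals $[5^{-k}R_0,5^{-k+1}R_0)$; $R(p)$ itself generally takes a continuum of values.

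The missing idea is to make the candidate set independent of $R$. The paper chooses $p_j$ maximizing $R$ on $K_j=M\setminus\bigcup_{i<j}B(p_i,2R_i)$. This set is closed in $M$, hence compact, so upper semicontinuity (Lemma~\ref{lem:admissible_prop}) produces a maximizer. Because the $K_j$ are nested, $R_j\le R_i$ for $i<j$. Then $p_j\in K_j$ gives $\dist_g(p_i,p_j)\ge 2R_i\ge R_i+R_j$, which is the disjointness. Covering by the balls $B(p_j,2R_j)$ is automatic, since the procedure stops exactly when $K_{N+1}=\emptyset$. Termination follows from the positive lower bound on $R$ (the centers are $2\rho$-separated), or equally from your volume count.
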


\begin{proof}
We define the radius function $R: M \to (0, + \infty)$ by assigning $R(p)$ the radius of the unique $\theta$-admissible ball centered at $p$.
By Lemma~\ref{lem:admissible_prop}, the function $R$ is upper semicontinuous. Hence the function $R$ attains its maximum on every compact subset.

Now we set $K_1 = M$. Then choose $p_1 \in K_1$ such that $R_1 = R(p_1)$ is the maximum of $R$ on $K_1$. 
Suppose that pairwise disjoint balls
 \[ B(p_1, R_1), \cdots B(p_{j-1}, R_{j-1}) \]
have already been chosen. 
Set $K_{j} = M \setminus \bigcup_{i = 1}^{j-1} B(p_i, 2R_i)$. If $K_j \neq \emptyset$, then choose $p_j \in K_j$ 
maximizing the radius function $R(p)$ on $K_j$. 
Set $R_j = R(p_j)$.
We add the ball $B(p_j, R_j)$ to the collection. 
Continue this process until no further balls can be added. 
By Proposition~\ref{prop:admissible_existence},
at any point in $M$ there exists a unique $\theta$-admissible ball. Moreover, Lemma~\ref{lem:admissible_prop} implies that the radius of $\theta$-admissible balls has a positive lower bound when $M$ is compact. 
Let $\rho = \inf \{ R(p) :  p \in M\} > 0$. Then
 \[ \dist_g(p_i, p_j) \geq 2R_i \geq 2\rho \]
for every $i < j$.  
Therefore all the chosen centers $p_j$ are $2\rho$-separated. 
Hence the above procedure terminates after finitely many steps. 
By the construction, $R_1 \geq R_2 \geq \cdots \geq R_N$. 
The above construction immediately yields that the collection $\{ B(p_j, 2R_j) : j = 1, \cdots , N \}$ is an open cover of $M$. By the construction, the procedure continues whenever $K_j = M \setminus \bigcup_{i<j} B(p_i, 2R_i)$ is nonempty. Therefore, when the process terminates, we must have $K_{N+1} = \emptyset$. Hence,
 \[ M = \bigcup_{j=1}^N B(p_j, 2R_j). \]

Next we show that balls in the collection
 \[ \{ B(p_1, R_1), \cdots , B(p_N, R_N) \} \]
are mutually disjoint. Note that at the $j$th step, we choose
 \[ p_j \in K_j = M \setminus \bigcup_{i < j} B(p_i, 2R_i). \]
Thus, for every $i < j$,
 \[ \mathrm{dist}_g (p_i, p_j) \geq 2R_i. \]
Since the compact sets $K_j$ are nested, their maximal radii satisfy $R_j \leq R_i$.  
Consequently, 
 \[ \mathrm{dist}_g (p_i, p_j) \geq 2R_i \geq R_i + R_j. \]
Therefore, the open balls $B(p_i, R_i)$ and $B(p_j, R_j)$ are disjoint.      
\end{proof}

\begin{remark}
 In Berger's covering trick lemma (see \cite[Lemma 125, \S 7.2]{Berger2003}), a much simpler technique is used to construct an open cover using metric balls with the same radius. In Lemma~\ref{lem:covering_trick}, the construction of an open cover by admissible balls is more delicate.    
\end{remark}

Given a $\theta$-admissible metric ball $B(p, R)$, for convenience,
choose the unique nonnegative integer $k$ such that
\begin{equation*}
 5^{-k} R_0 \leq R < 5^{-k+1} R_0 .
\end{equation*}
Recall that the constant $\theta$ is chosen in (\ref{equ:theta}). Then the exponent $k$ is controlled by $\theta$.  

\begin{lemma} \label{lem:admissible}
 The exponent $k$ satisfies $k < \theta$. 
\end{lemma}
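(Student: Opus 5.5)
We argue by iterating the failure of the doubling condition (2) on the scales $5^{-i}R_0$ between $R$ and $R_0$, and then comparing the resulting volume growth with the total volume via Croke's inequality (\ref{Croke}). If $k=0$ then $R=R_0$, and the claim $k<\theta$ is just the positivity of $\theta$ noted after (\ref{equ:theta}). So assume $k\geq 1$, which forces $R<R_0$, so both conditions (1) and (2) hold for the admissible ball $B(p,R)$.

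Set $r_i = 5^{-i}R_0$ for $i=1,\dots,k$. Since $5^{-k}R_0\le R<5^{-k+1}R_0$, each radius $r_1,\dots,r_{k-1}$ is strictly greater than $R$ and at most $R_0$. The radius $r_{k}$ itself may be $\le R$, so we do not use it. For the chain we instead use the radii $R^{\prime}=5^{j}R^{\ast}$ with $R^{\ast}$ slightly larger than $R$. Concretely, pick $R<R^{\ast}\le 5R$ with $5^{k-1}R^{\ast}\le R_0$; the choice $R^{\ast}=\min(5R,\,5^{-(k-1)}R_0)$ works because $5^{-(k-1)}R_0>R$. Applying (2) to $R^{\prime}=5^{j}R^{\ast}$ for $j=0,\dots,k-2$, all of which lie in $(R,R_0]$, gives
\[
\vol_g(B(p,5^{k-1}R^{\ast}))>5^{(k-1)(n+\theta)}\vol_g(B(p,R^{\ast})).
\]
To gain one more factor, apply (2) once more at a radius $R^{\prime}\le R_0$ with $5R^{\prime}\le 5R_0$; the top scale $5^{k-1}R^{\ast}$ is itself $\le R_0$, so this step gives a further factor of $5^{n+\theta}$. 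The total is
\[
\vol_g(M)\ge\vol_g(B(p,5^{k}R^{\ast}))>5^{k(n+\theta)}\vol_g(B(p,R^{\ast}))\ge 5^{k(n+\theta)}\vol_g(B(p,R)).
\]
Getting exactly $k$ factors rather than $k-1$ is the part I expect to need the most care. If the count falls one short, we can fall back on the slack in the remaining estimates.

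Next we bound $\vol_g(B(p,R))$ from below. Since $R<R_0\le\frac12\inj(M,g)$, Croke's inequality (\ref{Croke}) gives $\vol_g(B(p,R))\ge\beta_nR^n\ge\beta_n5^{-kn}R_0^n$. Combined with the chain above, the factors $5^{kn}$ cancel:
\[
\vol_g(M)>5^{k\theta}\beta_nR_0^n,
\qquad\text{that is,}\qquad
k\theta<\log_5\frac{\vol_g(M)}{\beta_nR_0^n}=\theta^2 .
\]
Dividing by $\theta>0$ gives $k<\theta$. The square root in (\ref{equ:theta}) was chosen precisely so that this cancellation works.

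The main obstacle is the bookkeeping of how many applications of (2) are available, given the endpoint restrictions $R<R^{\prime}\le R_0$. If only $k-1$ factors can be extracted, the estimate becomes $(k-1)\theta<\theta^2$, i.e.\ $k<\theta+1$. In that case I would recover the sharp inequality from the strictness in the chain, or else by using the bound $\vol_g(B(p,R))\ge\beta_nR^n$ with $R\ge 5^{-k}R_0$ more carefully, for example by starting the chain at $R^{\prime}$ just above $R$ so that its top radius exceeds $5^{-1}R_0$.
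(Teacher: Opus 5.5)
Your argument is correct and is essentially the paper's proof: iterate condition (2) $k$ times to get $\vol_g(M) > 5^{k(n+\theta)}\vol_g(B(p,R))$, then apply Croke's inequality with $R \geq 5^{-k}R_0$, which gives $k\theta < \theta^2$. The only difference is in the choice of radii: the paper applies (2) at $R' = 5^{-i}R_0$ for $i = 0, \dots, k-1$, and you overlooked that $r_0 = R_0$ also lies in $(R, R_0]$. Your rescaled radii $5^{j}R^{\ast}$, $j = 0, \dots, k-1$, also give exactly $k$ factors, so the concern about being one short is unfounded and the fallback paragraph can be deleted; strictness alone could not recover a missing factor $5^{\theta}$ in any case.
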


\begin{proof}
If $k=0$, then the result is immediate because $\theta > 0$. Now assume $k \geq 1$.
By Croke's local embolic inequality (\ref{Croke}), we have 
\begin{align*}
 \vol_{g} (M) & \geq \vol_{g} (B(p, 5R_0))  \\ 
  & > 5^{n+\theta} \vol_{g} ( B(p, R_0) ) \\ 
  & \hspace{20pt} \vdots \\ 
  & > 5^{k(n+\theta)} \vol_{g} ( B(p, 5^{-k + 1} R_0 ) ) \\
  & > 5^{k(n+\theta)} \vol_{g} ( B(p, R) ) \\
  & \geq 5^{k(n+\theta)} \ts \beta_n R^n \\
  & \geq 5^{k(n+\theta)} \beta_n ( 5^{-k} R_0 )^n .
\end{align*}
Hence we obtain an upper bound on $k$,
\begin{align*}
 k < & \, \frac{1}{\theta} \cdot \log_5{\frac{ \vol_{g} (M) }{\beta_n R_0^n} }   \\
    = & \, \theta.
\end{align*}
\end{proof}

The triangle inequality implies the following useful fact. 

\begin{lemma} \label{lem:5_radius}
 In the open cover $\Ucal = \{ B(p_j, 2R_j) : j = 1, 2, \cdots , N \}$, if $B(p_i, 2R_i) \cap B(p_j, 2R_j) \neq \emptyset$ and $i \geq j$, then $B(p_i, R_i) \subseteq B(p_j, 5R_j)$.
\end{lemma}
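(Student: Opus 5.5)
The proof is a direct application of the triangle inequality, using the ordering $R_1 \geq R_2 \geq \cdots \geq R_N$ from Lemma~\ref{lem:covering_trick}.

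First, since the balls $B(p_i, 2R_i)$ and $B(p_j, 2R_j)$ intersect, I pick a point $q$ in their intersection. Then
\[ \dist_g(p_i, p_j) \leq \dist_g(p_i, q) + \dist_g(q, p_j) < 2R_i + 2R_j. \]
Second, I take an arbitrary $x \in B(p_i, R_i)$ and estimate
\[ \dist_g(x, p_j) \leq \dist_g(x, p_i) + \dist_g(p_i, p_j) < R_i + 2R_i + 2R_j = 3R_i + 2R_j. \]
Third, the hypothesis $i \geq j$ together with the monotone indexing of Lemma~\ref{lem:covering_trick} gives $R_i \leq R_j$. Hence $3R_i + 2R_j \leq 5R_j$, so $x \in B(p_j, 5R_j)$, which proves the inclusion.

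The only step needing care is the third one. The monotonicity of the radii must come from the indexing in Lemma~\ref{lem:covering_trick}, which was obtained by maximizing $R$ over the nested sets $K_j$. It is not a property of admissible balls in general.

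Finally, the inequalities are strict because the balls are open, so we land in the open ball $B(p_j, 5R_j)$ and no closure issues arise.
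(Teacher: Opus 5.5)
Your proof is correct and matches the paper's approach. The paper simply attributes the lemma to the triangle inequality. Your estimate $\dist_g(x,p_j) < 3R_i + 2R_j \le 5R_j$, with $R_i \le R_j$ taken from the monotone indexing in Lemma~\ref{lem:covering_trick}, is exactly the argument intended.
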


Denote by $T$ the number of pairwise intersections in the open cover $\Ucal$. Hence $T$ is the number of edges in the 1-skeleton of the nerve $\mathcal{N}(\mathcal{U})$.    
 For $1 \leq j \leq N$, suppose that in the cover $\Ucal$, each ball in the subcollection $ \{ B(p_{j_1}, 2R_{j_1})$, $B(p_{j_2}, 2R_{j_2})$, $\cdots $ , $B(p_{j_{T_j}}, 2R_{j_{T_j}}) \} $ has nonempty intersection with the ball $B(p_j, 2 R_j)$, and $j_i > j$ for $1 \leq i \leq T_j$. Then we have $ T = T_1 + T_2 + \cdots + T_{N-1} $. 
 
\begin{proposition} \label{prop:gromov_covering}
 Let $M$ be a closed smooth $n$-dimensional manifold. 
 Then there exists a Riemannian metric $g$ on $M$, such that the number $T$ of pairwise intersections in the open cover $\Ucal$ is bounded above in terms of the embolic volume $\emb(M)$ as follows,
  \[ T \leq C_n \emb(M) \exp{\left( C_n^{\prime} \sqrt{\log{(1+\emb(M)})} \right)},\]
 where $C_n$ and $C_n^{\prime}$ are positive constants depending only on $n$.   
\end{proposition}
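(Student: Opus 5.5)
We first fix the metric: choose $g$ on $M$ with $\vol_g(M)/\inj(M,g)^n \leq 2\emb(M)$, which is possible by the definition of $\emb(M)$ as an infimum. With $R_0$ as in (\ref{equ:R_0}) and $\theta$ as in (\ref{equ:theta}), we get
\[ 5^{\theta^2} = \frac{\vol_g(M)}{\beta_n R_0^n} \leq \frac{2(8n+16)^n}{\beta_n}\,\emb(M). \]
Consequently $\theta \leq \sqrt{\log_5 (c_n \emb(M))}$, and Berger's inequality (\ref{Berger_emb}) keeps $\emb(M)$ bounded below. This yields $\theta \leq C\sqrt{\log(1+\emb(M))}$ with $C$ depending only on $n$. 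Everything below is then a counting argument with this $g$ and the cover $\Ucal$ from Lemma~\ref{lem:covering_trick}.

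The main step bounds each $T_j$ by $5^{n+\theta}$. Fix $j$ and consider the balls $B(p_{j_i},2R_{j_i})$ with $j_i>j$ that meet $B(p_j,2R_j)$. By Lemma~\ref{lem:5_radius}, each $B(p_{j_i},R_{j_i}) \subseteq B(p_j,5R_j)$. These small balls are pairwise disjoint by Lemma~\ref{lem:covering_trick}, so
\[ \sum_{i=1}^{T_j} \vol_g B(p_{j_i},R_{j_i}) \leq \vol_g B(p_j,5R_j) \leq 5^{n+\theta}\vol_g B(p_j,R_j), \]
where the last inequality is $\theta$-admissibility. To turn this into a count, each summand should be comparable to $\vol_g B(p_j,R_j)$.

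The naive lower bound comes from Croke (\ref{Croke}): since $R_{j_i} \leq R_0 \leq \frac12\inj$, we have $\vol_g B(p_{j_i},R_{j_i}) \geq \beta_n R_{j_i}^n$. Radii decrease, however, so this loses powers of $5^{k}$. Instead I would sum over all $j$ and group by the scale index $k$ of Lemma~\ref{lem:admissible}, of which there are fewer than $\theta+1$ values. I would use condition (2) of admissibility: for a ball of radius $R_j$ with exponent $k$, iterating (2) gives $\vol_g B(p_j,5^{m}R_j) > 5^{m(n+\theta)}$-type growth up to scale $R_0$.

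The resulting estimate is $T \leq \sum_j T_j$, and the total is controlled as follows. Each edge $(j,j_i)$ is charged to the smaller ball $B(p_{j_i},R_{j_i})$. Each smaller ball is charged by at most a bounded number of larger balls of comparable scale, again by a volume-packing argument inside $B(p_{j_i},\,\cdot\,)$ using disjointness. The number $N$ of balls is bounded by $\vol_g(M)/(\beta_n \rho^n)$. I would try to show the final count has the form
\[ T \lesssim N_{\text{eff}}\cdot 5^{(n+\theta)}\cdot(\theta+1), \qquad N_{\text{eff}} \lesssim \frac{\vol_g(M)}{\beta_n R_0^n}\,5^{O(n\theta)}. \]
Here the $5^{O(n\theta)}$ factor absorbs the loss from small balls via $k<\theta$.

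Substituting the bound on $\theta$ gives the required estimate
\[ T \leq C_n\,\emb(M)\,\exp\!\left(C_n'\sqrt{\log(1+\emb(M))}\right). \]
The main obstacle is the bookkeeping across scales, namely making sure the losses are only $5^{O(\theta)}$ rather than $5^{O(\theta^2)}$. Condition (2) is exactly what prevents small admissible balls from being numerous: a point with small admissible radius has doubling volume growth at all larger scales, which forces volume of order $5^{k(n+\theta)}\beta_nR^n$ around it and hence an extra factor $5^{k\theta}$ beyond the Euclidean count. I would check this carefully, since it is where the $\sqrt{\log}$ exponent has to come out.
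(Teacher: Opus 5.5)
Your choice of metric and the bound $\theta\le C\sqrt{\log(1+\emb(M))}$ are fine. The counting itself, however, is never carried out, and the steps you do state do not hold as written.

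Your main step, $T_j\le 5^{n+\theta}$, does not follow from your display. Packing the balls $B(p_{j_i},R_{j_i})$ into $B(p_j,5R_j)$ bounds their total volume by $5^{n+\theta}\vol_g B(p_j,R_j)$, not their number. Nothing prevents many much smaller balls from meeting $B(p_j,2R_j)$.

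The fallback plan is also unjustified. When you charge an edge to the smaller ball, the larger balls meeting $B(p_{j_i},2R_{j_i})$ need not be of comparable scale. Their disjoint cores $B(p_j,R_j)$ stay at distance at least $R_j$ from $p_{j_i}$, so a packing argument near $p_{j_i}$ does not count them. Without curvature bounds, no Besicovitch-type multiplicity bound is available either. Finally, $N_{\mathrm{eff}}$ is never defined, and the closing estimate is only something you ``would try to show''.

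The missing observation is that the Croke bound you discarded as naive already suffices. Its loss is $5^{nk_{j_i}}<5^{n\theta}$ by Lemma~\ref{lem:admissible}, which is $5^{O(\theta)}$, not $5^{O(\theta^2)}$. Do not bound each $T_j$ separately; sum over $j$ first.

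Three facts combine:
\begin{itemize}
\item Disjointness of the balls $B(p_j,R_j)$ and admissibility condition (1) give $\sum_j\vol_g B(p_j,5R_j)\le 5^{n+\theta}\vol_g(M)$.
\item Lemma~\ref{lem:5_radius} and disjointness give $\vol_g B(p_j,5R_j)\ge\sum_{i=1}^{T_j}\vol_g B(p_{j_i},R_{j_i})$.
\item Croke's inequality (\ref{Croke}) with $k_{j_i}<\theta$ gives $\vol_g B(p_{j_i},R_{j_i})\ge\beta_n(5^{-k_{j_i}}R_0)^n>\beta_n5^{-n\theta}R_0^n$.
\end{itemize}
Together these yield
\[ T\le\frac{\vol_g(M)}{\beta_nR_0^n}\,5^{\,n+(n+1)\theta}. \]
Since $5^{\theta^2}=\vol_g(M)/(\beta_nR_0^n)$, this is $C_n\emb(M,g)\exp\bigl(C_n'\sqrt{\log(1+\emb(M,g))}\bigr)$, and your choice of $g$ finishes the proof.

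Condition (2) of admissibility enters only through Lemma~\ref{lem:admissible}. No grouping by scale or charging to the smaller ball is needed.
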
 

The proof of Proposition~\ref{prop:gromov_covering} is an adaptation of Gromov's argument in \cite[Section 5.3.]{Gromov1983}. For completeness, we include the proof.

\begin{proof}
In the construction of $\theta$-admissible metric balls $B(p_j, R_j) \subseteq M$, we assume that there exists a nonnegative integer $k_j$, such that
 \[ 5^{-k_j}R_0 \leq R_j < 5^{-k_j + 1} R_0 , \]
where the exponent $k_j$ satisfies $k_j < \theta$, see Lemma~\ref{lem:admissible}.

By Lemma~\ref{lem:5_radius} and Croke's local embolic inequality (\ref{Croke}), we have the following estimate for the number $T$ of pairwise intersections of balls in $\Ucal$, 
\begin{align*}
  \vol_{g}(M) & \geq \sum_{j = 1}^N \vol_{g}( B(p_j, R_j) ) \\ 
  & \geq 5^{-n-\theta} \sum_{j=1}^N \vol_{g} ( B(p_j, 5R_j) ) \\  
  & \geq 5^{-n-\theta} \sum_{j=1}^N \sum_{i = 1}^{T_j} \vol_{g} ( B( p_{j_i}, R_{j_i} ) )  \\   
  & \geq 5^{-n-\theta} \sum_{j = 1}^N \sum_{i = 1}^{T_j} \beta_n R_{j_i}^n \\        
  & \geq 5^{-n-\theta} \sum_{j = 1}^N \sum_{i = 1}^{T_j} \beta_n \left( 5^{-k_{j_i}} R_0 \right)^n \\ 
  & > 5^{-n-\theta} \ts T \ts \beta_n 5^{ - n \cdot \sqrt{ \log_5{ \frac{\vol_{g}(M)}{\beta_n R_0^n} } } } R_0^n .
\end{align*}
Hence we have
\begin{align*}
 T & \leq \frac{\vol_{g}(M)}{\beta_n R_0^n} 5^{  n \cdot \sqrt{ \log_5{ \frac{\vol_{g}(M)}{\beta_n R_0^n}  } } } \cdot 5^{n+\theta} \\
     & \leq \frac{\vol_{g}(M)}{\beta_n R_0^n} 5^{n +  (n + 1) \cdot \sqrt{ \log_5 { \frac{\vol_{g}(M)}{\beta_n R_0^n} } }} . 
\end{align*}
Therefore,
 \[ T \leq  C_n \frac{\vol_g(M)}{\inj(M, g)^n} \exp{\left( C_n^{\prime} \sqrt{ \log{\left( 1 + \frac{\vol_g(M)}{\inj(M, g)^n} \right) } } \, \right)}, \]
where $C_n$ and $C_n^{\prime}$ are positive constants depending only on $n$.

We claim that there exists a Riemannian metric $g_0$ on $M$ for which the associated cover satisfies  
 \begin{equation} \label{equ:T_Emb}  
  T \leq  C_n \emb(M) \exp{\left( C_n^{\prime} \sqrt{ \log{(1+ \emb(M))} } \, \right)}. 
  \end{equation} 
Set
 \[ \emb(M, g) = \frac{\vol_g(M)}{\inj(M, g)^n} , \]
thus
 \[ \emb(M) = \inf_g \emb(M, g), \]
where the infimum runs over all Riemannian metrics $g$ on $M$.    
Then the preceding estimate becomes  
 \[ T \leq C_n \emb(M, g) \exp{\left( C_n^{\prime} \sqrt{\log{(1+ \emb(M, g))}} \right)}. \]
Now we choose a Riemannian metric $g_0$ such that
 \[ \emb(M, g_0) \leq 2 \emb(M). \]   
The metric $g_0$ exists by the definition of the infimum. After adjusting the dimension-dependent constants, this immediately gives the desired estimate.
\end{proof}

Let $t_k$ denote the number of $k$-simplices of the nerve $\Ncal(\Ucal)$, $0 \leq k \leq n$. 
According to the definition of nerve, $t_1$ is equal to the number $T$ of pairwise intersections of balls in $\Ucal$. 
\begin{lemma} \label{lem:count_n_simplices} 
 Let $\Ucal$ be the open cover given by
 Proposition~\ref{prop:gromov_covering}.    
 Then the total number of simplices of $\Ncal(\Ucal)^{(n)}$ satisfies
  \[ \sum_{k=0}^n t_k \leq C_n \, (1+\emb(M))^{n+1} \, \exp{\left( C_n^{\prime} \sqrt{\log(1 + \emb(M))} \right)}. \]
\end{lemma}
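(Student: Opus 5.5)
Our plan is to bound each $t_k$ by a crude combinatorial count in terms of $t_0 = N$ and $t_1 = T$, and then use Proposition~\ref{prop:gromov_covering} for $T$ together with a separate bound for $N$.

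\textbf{Counting vertices.} The balls $B(p_j,R_j)$ are pairwise disjoint by Lemma~\ref{lem:covering_trick}. Each has radius at least $5^{-k_j}R_0 > 5^{-\theta}R_0$ by Lemma~\ref{lem:admissible}, and radius at most $R_0 \leq \frac12\inj(M,g)$. Croke's inequality (\ref{Croke}) therefore gives
\[
\vol_g(M) \geq \sum_j \vol_g B(p_j,R_j) \geq N\beta_n 5^{-n\theta}R_0^n .
\]
With $\theta$ as in (\ref{equ:theta}), this yields
\[
N \leq \frac{\vol_g(M)}{\beta_nR_0^n}\, 5^{n\theta}.
\]
Running the same manipulation as in the proof of Proposition~\ref{prop:gromov_covering}, and choosing the metric $g_0$ with $\emb(M,g_0)\le 2\emb(M)$, we get $t_0 = N \leq C_n \emb(M)\exp\bigl(C_n'\sqrt{\log(1+\emb(M))}\bigr)$. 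Proposition~\ref{prop:gromov_covering} already gives the same form of bound for $t_1 = T$.

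\textbf{Higher simplices.} Index each $k$-simplex $\{u_{i_0},\dots,u_{i_k}\}$ with $i_0<\dots<i_k$ by its smallest vertex $i_0$. Since the simplex lies in the nerve, every ball $B(p_{i_m},2R_{i_m})$ meets $B(p_{i_0},2R_{i_0})$. Hence $\{i_1,\dots,i_k\}$ is a $k$-element subset of the $T_{i_0}$ later neighbours of $i_0$, so
\[
t_k \leq \sum_{j} \binom{T_j}{k} \leq \sum_j T_j^k \leq \Bigl(\sum_j T_j\Bigr)^k = T^k .
\]
This uses only the fact that the $T_j$ are nonnegative integers. So $t_k \leq T^k$ for $1\le k\le n$, and $\sum_{k=0}^n t_k \leq N + \sum_{k=1}^n T^k \leq N + n(1+T)^n$.

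\textbf{Assembling.} Set $E=\emb(M)$ and $F=\exp\bigl(C_n'\sqrt{\log(1+E)}\bigr)$. The step above then gives
\[
\sum_k t_k \leq C(1+E)F + n\bigl(1+C E F\bigr)^n \leq C_n(1+E)^n F^n .
\]
Because $F^n=\exp\bigl(nC_n'\sqrt{\log(1+E)}\bigr)$, the factor $F^n$ is of the same form after enlarging $C_n'$. Also $(1+E)^n\le(1+E)^{n+1}$, and $\emb(M)\ge\emb(S^n)>0$ keeps the constants uniform. Together these give the stated bound, with room to spare in the exponent $n+1$.

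The main point requiring care is the passage from edges to higher simplices. I expect it to go through as above, because every higher simplex is determined by its minimal vertex together with a subset of that vertex's forward neighbours. The only other subtlety is bounding $N$, which Proposition~\ref{prop:gromov_covering} does not state. For this we either redo the disjoint-balls volume count as above, or, more simply, note that $N\le T+1$: $M$ is connected, so the $1$-skeleton of the nerve is connected.
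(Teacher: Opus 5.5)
Your proof is correct, but it handles the higher simplices differently from the paper.

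The paper uses only the connectedness of the $1$-skeleton to get $N \le t_1+1$. It then bounds every $t_k$ crudely by $\binom{N}{k+1}$, which gives $\sum_k t_k \le (n+1)(t_1+1)^{n+1}$; that is where the exponent $n+1$ in the statement comes from.

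You instead charge each $k$-simplex to its smallest vertex together with a $k$-subset of that vertex's forward neighbours. This gives $t_k \le \sum_j \binom{T_j}{k} \le T^k$. The argument is a purely combinatorial clique count in the $1$-skeleton: it uses only that the vertices of a simplex are pairwise adjacent, and not the geometry or the ordering by radii. It improves the polynomial factor to $(1+\emb(M))^{n}$.

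The cost is that $t_0 = N$ is no longer controlled automatically, so you need a separate vertex bound. Your disjoint-ball volume count is valid: it is the same computation as in Proposition~\ref{prop:gromov_covering}, applied to the balls themselves rather than to their neighbours. Your fallback $N \le T+1$ is exactly the paper's observation and suffices on its own.

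In short, the paper's route is shorter, and yours gives a slightly sharper estimate that still implies the stated bound.
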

\begin{proof}
Since $M$ is connected and $\Ucal$ is a cover of nonempty sets, the $1$-skeleton of $\Ncal(\Ucal)$ is connected. If $N$ denotes the number of vertices, then 
 \[ N \leq t_1 + 1 .\]
Consequently,
 \begin{align*}
  \sum_{k=0}^n t_k \leq & \sum_{k=0}^n \left( 
   \begin{array}{c}
    N \\
    k+1
   \end{array}
  \right) \\
  \leq & \sum_{k=0}^n \left( 
   \begin{array}{c}
    t_1 + 1 \\
    k+1
   \end{array}
  \right)  \\
  \leq & (n+1) (t_1 + 1)^{n+1}. 
 \end{align*} 
By Proposition~\ref{prop:gromov_covering}, there exists a Riemannian metric $g$ on $M$ such that
 \[ t_1 \leq C_n \emb(M) \, \exp{\left( C_n^{\prime} \sqrt{\log(1 + \emb(M))} \right)}. \]
After changing the constants $C_n$ and $C_n^{\prime}$, we obtain
 \[ \sum_{k=0}^n t_k \leq C_n \, (1+\emb(M))^{n+1} \, \exp{\left( C_n^{\prime} \sqrt{\log(1 + \emb(M))} \right)}. \]    
\end{proof}

% \bigskip

\section{Proof of the main theorem} 

In this section, we prove the main result (Theorem~\ref{thm_main}) of this paper.

Recall from Section~\ref{sec:gromov_covering} that, in Gromov's covering argument, we construct an open cover $\Ucal$ of $M$ using $\theta$-admissible metric balls. 
More precisely, we use the finite collection constructed in Lemma~\ref{lem:covering_trick}.

\begin{proposition} \label{prop:domination}
 Let $M$ be a closed $n$-dimensional Riemannian manifold, 
 and let $ \Ucal = \{ B(p_j, 2R_j) : j = 1, 2, \cdots , N \}$ be the open cover constructed in Section 3. Suppose that 
  \[ R_j \leq R_0 =  \frac{1}{8n + 16} \inj(M, g)  \]
  for every $j$.
  Then there exist continuous maps $f: M \to |\Ncal(\Ucal)^{(n)}|$ and $\psi: |\Ncal(\Ucal)^{(n)}| \to M$ such that $\psi \circ f$ is homotopic to the identity map. Moreover, if $\iota: |\Ncal(\Ucal)^{(n)}| \to |\Ncal(\Ucal)|$ denotes the inclusion, then $\iota \circ f$ is homotopic to the Alexandrov map $\Phi$. 
\end{proposition}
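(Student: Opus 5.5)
Since $M$ is an $n$-dimensional manifold, it has the homotopy type of an $n$-dimensional CW complex, so cellular approximation pushes the Alexandrov map $\Phi\colon M \to |\Ncal(\Ucal)|$ into the $n$-skeleton. Concretely, triangulate $M$ smoothly (Whitehead), regard $|\Ncal(\Ucal)|$ as a CW complex, and homotope $\Phi$ to a cellular map $f\colon M\to |\Ncal(\Ucal)^{(n)}|$. Then $\iota\circ f\simeq \Phi$ holds by construction, which gives the last assertion of the statement.

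The left inverse $\psi$ will be built from nearest-point data and geodesic convexity, using the bound $R_j \le R_0 = \inj(M,g)/(8n+16)$. On vertices, set $\psi(u_j)=p_j$. We then extend $\psi$ skeleton by skeleton over $\Ncal(\Ucal)^{(n)}$, keeping every simplex inside a small ball.

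\emph{Key estimate.} If $u_{j_0},\dots,u_{j_k}$ span a simplex, the balls $B(p_{j_i},2R_{j_i})$ share a point $q$. Hence all the $p_{j_i}$ lie in $B(q,2R_0)$, and any two are at distance $<4R_0$.

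\emph{Extension.} We extend inductively with the image of each $k$-simplex contained in $B(q, c_k R_0)$, where $c_k$ grows at most linearly. Suppose $\psi$ is defined on the boundary of a $k$-simplex $\sigma$ with image in a ball of radius $r<\tfrac12\inj(M,g)$ centered at $p_{j_0}$. That ball is contractible, indeed geodesically star-shaped from its center, so we cone off the boundary by geodesics from $p_{j_0}$. Equivalently, use a Riemannian center-of-mass or a geodesic-join construction, which stays inside a ball of radius about $(k+2)\cdot 2R_0$. The constant $8n+16 = 8(n+2)$ is exactly what keeps all images for $k\le n$ inside balls of radius $<\tfrac12 \inj$, where geodesics are unique and vary continuously.

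\emph{Homotopy $\psi\circ f\simeq \mathrm{id}_M$.} It suffices to treat $\psi\circ\Phi$ restricted appropriately. More cleanly, replace $f$ by the composite: for $x\in M$, $\Phi(x)$ lies in the simplex spanned by the $u_j$ with $x\in B(p_j,2R_j)$. So $\psi(\Phi(x))$ lies within distance about $c_n R_0$ of $x$, which is $<\inj(M,g)/2$. The unique minimizing geodesic from $\psi(f(x))$ to $x$ then gives a homotopy, continuous because the two points stay within the injectivity radius.

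To make this work with the cellular $f$ rather than $\Phi$, choose the cellular approximation so that $f(x)$ stays in the closed carrier simplex of $\Phi(x)$. This is possible by doing the approximation relative to the simplicial structure, for instance via a simplicial approximation of $\Phi$ after fine subdivision of $M$'s triangulation, which maps into the carrier. Then $\psi(f(x))$ is again within $c_nR_0$ of $x$.

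\emph{Main obstacle.} The delicate step is the inductive extension of $\psi$ with explicit radius control, together with ensuring that $f$ respects carriers. I would first try the explicit geodesic-cone construction from the lowest-index vertex and bound radii by the triangle inequality. I would fall back on the standard fact that a simplicial approximation $f$ of $\Phi$ satisfies $f(x)\in\operatorname{carrier}(\Phi(x))$, and a simplicial map from an $n$-dimensional triangulation automatically lands in the $n$-skeleton.
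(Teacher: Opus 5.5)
Your proposal is correct and is essentially the paper's argument: $\psi$ sends $u_j$ to $p_j$ and fills simplices by geodesic coning inside balls of radius below $\inj(M,g)$, and $f$ is a simplicial map from a fine triangulation of $M$ whose image simplices are spanned by balls containing the point, so it lands in $|\Ncal(\Ucal)^{(n)}|$ and $\iota\circ f\simeq\Phi$ by a straight-line homotopy. Then $\psi\circ f\simeq\mathrm{id}_M$ along unique minimizing geodesics; the paper differs only cosmetically, using Gromov's ordered straight simplices (vertices ordered by radius, coning from the top vertex so faces agree) and choosing $f(v)$ as any ball containing the closed star of $v$ rather than quoting simplicial approximation. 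For the bookkeeping, your cone recursion actually gives $\psi(\sigma)\subseteq B(p_{j_0},4kR_0)\subseteq B(q,(4k+2)R_0)$ for every common point $q$, not radius $(k+2)\cdot 2R_0$; this still suffices because $4nR_0<(4n+2)R_0<(4n+8)R_0=\tfrac12\inj(M,g)$, and the cellular-approximation opening and the center-of-mass alternative are unnecessary, the latter needing convexity-radius control that $\inj(M,g)$ alone does not give.
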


Theorem \ref{thm_main} follows from Proposition~\ref{prop:domination} and Gromov's covering argument in Section 3.  
 
\begin{proof}[Proof of Theorem~\ref{thm_main}]
Choose the metric $g$ and the cover $\Ucal$ as in Proposition~\ref{prop:gromov_covering}. Then  
 \[ t_1 \leq C_n \emb(M) \exp{\left( C_n^{\prime} \sqrt{\log{\left( 1 + \emb(M) \right)}} \right)},  \]
where $t_1$ is the number of $1$-simplices of the nerve $\Ncal(\Ucal)$.

By Proposition~\ref{prop:domination}, there exist continuous maps $f: M \to |\Ncal(\Ucal)^{(n)}|$ and $\psi: |\Ncal(\Ucal)^{(n)}| \to M$ such that $\psi \circ f$ is homotopic to the identity. Thus $M$ is dominated by $|\Ncal(\Ucal)^{(n)}|$.  

Let $t_k$, $0 \leq k \leq n$, denote the number of $k$-simplices of $\Ncal(\Ucal)$. By Lemma~\ref{lem:count_n_simplices}, the total number $\sum_{k=0}^n t_k$ of simplices in $|\Ncal(\Ucal)^{(n)}|$ satisfies 
 \[ \sum_{k=0}^n t_k \leq C_n (1 + \emb(M) )^{n+1} \exp{\left( C_n^{\prime} \sqrt{\log{\left( 1 + \emb(M) \right)}} \right)}, \]
where $C_n$ and $C_n^{\prime}$ depend only on $n$. This proves the theorem.   
\end{proof}

The proof of Theorem~\ref{thm_02} also relies on Proposition~\ref{prop:domination} and Gromov's covering argument from Section 3.

\begin{proof}[Proof of Theorem~\ref{thm_02}]
Let $g$ be the Riemannian metric given in Proposition~\ref{prop:gromov_covering}. 
If we choose the open cover $\Ucal = \{ B(p_j, 2R_j) : j = 1, 2, \cdots , N \}$ of $\theta$-admissible metric balls in the Riemannian manifold $(M, g)$, as described above, 
then Lemma~\ref{lem:count_n_simplices} gives the required upper bound for the total number of simplices in $\Ncal(\Ucal)^{(n)}$.

Let $\iota: |\Ncal(\Ucal)^{(n)}| \to |\Ncal(\Ucal)|$ be the inclusion map.
By Proposition~\ref{prop:domination}, there exists a continuous map $f: M \to |\Ncal(\Ucal)^{(n)}|$ such that $\iota \circ f$ is homotopic to the Alexandrov map $\Phi$. Moreover, Proposition~\ref{nerve_homology} implies that $\Phi_{*}([M]) \neq 0$, since $M$ is essential, and the radii of the cover elements satisfy $2R_j \leq 2R_0 = \frac{1}{4n+8} \inj(M, g) \leq \frac{1}{8} \inj(M, g)$.  
Then
 \[ (\iota \circ f)_{*}([M]) = \iota_{*} ( f_{*} ([M]) ) = \Phi_{*}([M]) \neq 0. \]
Therefore,  $f_{*}([M]) \neq 0$. This proves Theorem~\ref{thm_02}. 
\end{proof}

We now prove Proposition~\ref{prop:domination}. 
The method is similar to that used in the proof of Lemma 7 in Guth \cite{Guth2011}. We construct a map $\psi$ from the underlying space $|\Ncal(\Ucal)|$ of the nerve to the manifold $M$. 
As noted by Guth, this construction goes back to Gromov, see \cite[pp. 85--86]{gromov_bounded_cohomology_1982}.

\begin{proof}[Proof of Proposition~\ref{prop:domination}]
By the definition of the nerve, 
each vertex $u_j$ of $\mathcal{N}(\mathcal{U})$ corresponds to a ball $B(p_j, 2R_j)$ in the open cover $\mathcal{U}$. Define a map $\psi$ on the set of vertices by $\psi(u_j) = p_j$.  
We next extend $\psi$ to be a continuous map on the n-skeleton of $|\Ncal(\Ucal)|$.

Fix a total ordering on the vertex set $\{u_i : i = 1, 2, \cdots , N \}$ of $\Ncal(\Ucal)$ such that 
 \[ u_i \leq u_j \]
if the radii of the corresponding balls in $M$ satisfy $R_i \leq R_j$. Moreover, when $R_i = R_j$, we define $u_i \leq u_j$ if $i < j$. 
Let $\sigma = [u_{i_0}, \cdots , u_{i_k}], k \leq n$ be a k-simplex of $\Ncal(\Ucal)$, where the vertices are written in the fixed order defined above. Thus $ R_{i_0} \leq \cdots \leq R_{i_k}$. Denote by $\mathrm{dist}_g( \, , \, )$ the distance function induced by the Riemannian metric $g$. 
Because the corresponding balls $B(p_{i_j}, 2R_{i_j})$ in the cover $\Ucal$ have nonempty common intersection, 
 \[ \mathrm{dist}_g(p_{i_{\ell}}, p_{i_k}) \leq 2R_{i_{\ell}} + 2R_{i_k} \leq 4 R_{i_k} \]
for every $0 \leq \ell < k$. 

Now for the $k$-simplex $\sigma = [u_{i_0}, \cdots , u_{i_k}], k \leq n$ in $\Ncal(\Ucal)$, we define a corresponding ordered straight k-simplex $s_k = [p_{i_0}, \cdots p_{i_k}]: \triangle^k \to M$ inductively.   
For $k=0$, it is the constant map with value $p_{i_0}$. Assume that $s_{k-1} = [p_{i_0}, \cdots , p_{i_{k-1}}]$ has been defined, and 
 \[ \dist_g(p_{i_{k-1}}, s_{k-1}(z)) \leq 4(k-1)R_{i_{k-1}} \]
holds for every $z \in \triangle^{k-1}$.  
Furthermore, we claim that
 \[ \dist_g(p_{i_k}, s_{k-1}(z)) < \inj(M, g). \]
Indeed, 
\begin{align*}
  \mathrm{dist}_g(s_{k-1}(z), p_{i_k}) \leq & \mathrm{dist}_g(s_{k-1}(z), p_{i_{k-1}}) + \mathrm{dist}_g(p_{i_{k-1}}, p_{i_k}) \\
                        \leq & 4(k-1) R_{i_{k-1}} + 4R_{i_k} \\
                        \leq & 4k R_{i_k} \\
                        < & \inj(M, g),
 \end{align*}
 where the last inequality is true since our assumption of $R_{i_k} \leq R_0 = \frac{1}{8n+16} \inj(M, g)$ for each $i_k$, see (\ref{equ:R_0}). 
Therefore, there is a unique minimizing geodesic segment between $p_{i_k}$ and every point $s_{k-1}(z)$ in the image of the $(k-1)$-simplex $s_{k-1}$. We join $p_{i_k}$ to $s_{k-1}(z)$ by the unique minimizing geodesic segment, and use these geodesic segments to form the cone. More explicitly, let $z \in \triangle^{k-1}, t \in [0, 1]$, and let $e_k$ be the $k$-th vertex of $\triangle^k$. Define the straight $k$-simplex $s_k$ by
 \[ s_k((1-t)z + t e_k) = \exp_{p_{i_k}}{\left( (1-t) \exp_{p_{i_k}}^{-1}(s_{k-1}(z)) \right)}. \]   
We claim that
 \[ \mathrm{dist}_g (s_k(w), p_{i_k}) \leq 4 k R_{i_k} \] 
for every $w\in \triangle^k$. 
Indeed, for $w = (1-t)z + te_k$, the definition yields 
 \[ \mathrm{dist}_g(s_k(w), p_{i_k}) = (1-t) \mathrm{dist}_g (s_{k-1}(z), p_{i_k}) \leq \mathrm{dist}_g(s_{k-1}(z), p_{i_k}) < \inj(M, g).  \] 
Therefore every point of the unique minimizing geodesic segment remains in $B(p_{i_k}, 4kR_{i_k})$, proving the claim.

Because the global ordering of vertices is used in the above construction of straight k-simplex, these ordered straight simplices agree on their common faces. Identify every simplex $\sigma \in \Ncal(\Ucal)$ with $\triangle^k$, where $k \leq n$. We then define $\psi: |\Ncal(\Ucal)^{(n)}| \to M$ by
 \[ \psi|_{\sigma} = s_k. \]
This gives a continuous map.   

We next construct a map $f: M \to |\Ncal(\Ucal)^{(n)}|$ and prove that $\psi \circ f$ is homotopic to the identity. 
After sufficiently many barycentric subdivisions of a finite triangulation $\mathcal{T}$ of $M$, the closed star of every vertex $v \in \mathcal{T}$ is contained in an element $B(p_{j(v)}, 2R_{j(v)})$ of $\Ucal$. 
Define $f$ on the vertices of $\mathcal{T}$ by $f(v) = u_{j(v)}$, and extend $f$ simplicially. Let $[v_0, \cdots , v_k]$ be a simplex in the triangulation $\mathcal{T}$.
Then the closed-star condition implies that the simplex $[v_0, \cdots , v_k]$ is contained in every ball $B(p_{j(v_{\ell})}, 2R_{j(v_{\ell})})$ of $\Ucal$, thus these $(k+1)$ balls have nonempty common intersection. Consequently, the vertices $u_{j(v_0)}, \cdots u_{j(v_k)}$ span a simplex of $\Ncal(\Ucal)$. Since the dimension of the simplicial complex $\mathcal{T}$ is $n$, the map $f$ takes values in $|\Ncal(\Ucal)^{(n)}|$. Now we show that $\iota \circ f$ is homotopic to the Alexandrov map $\Phi$, where $\iota: |\Ncal(\Ucal)^{(n)}| \to |\Ncal(\Ucal)|$ is the inclusion map. 
Indeed, for every $x \in M$, each vertex in the nerve occurring with nonzero barycentric coefficients in either $f(x)$ or $\Phi(x)$ corresponds to a ball in the cover $\Ucal$ containing $x$. These vertices therefore span a common simplex of the nerve, and the straight-line homotopy joining $f(x)$ to $\Phi(x)$ in barycentric coordinates provides the homotopy between $\iota \circ f$ and $\Phi$.

It remains to prove that $\psi \circ f$ is homotopic to the identity. Let $x$ be in a simplex $[v_0, \cdots , v_k]$ of the triangulation $\mathcal{T}$. Let $u_{*}$ be the largest vertex, in the fixed total ordering, among the vertices of the simplex $f([v_0, \cdots , v_k])$, and let $p_{*}$ and $R_{*}$ denote the corresponding center and radius. Since the simplex $[v_0, \cdots , v_k]$ is contained in the ball $B(p_{*}, 2R_{*})$, we have $\mathrm{dist}_g (x, p_{*}) < 2 R_{*}$. On the other hand, by the preceding construction, 
 \[ \psi (f(x)) \in B(p_{*}, 4kR_{*}) \subseteq B(p_{*}, 4nR_{*}). \]
It follows that
 \[ \mathrm{dist}_g(x, \psi(f(x))) \leq (4n + 2) R_{*} < \inj(M, g). \]
Thus $x$ and $\psi(f(x))$ are joined by a unique minimizing geodesic. Define 
 \[ H(x, t) = \exp_x(t \cdot \exp_x^{-1}(\psi(f(x))) ). \]
The preceding uniform distance estimate implies that $H$ is continuous and $H(x, 0) = x$, $H(x, 1) = \psi(f(x))$. Therefore, $\psi \circ f$ is homotopic to the identity map on $M$.      
\end{proof}

 \bigskip

\appendix

\section{Existence of admissible balls}
\label{sec:appendix}

Let $(M, g)$ be an n-dimensional Riemannian manifold, and
let $\mathrm{Sc}_p$ denote the scalar curvature at a point $p \in M$. 
For the volume of balls with small radii in $(M, g)$, we have the following expansion in terms of the scalar curvature $\mathrm{Sc}_p$. 
\begin{theorem}[see {\cite[Theorem 3.98, p. 168]{GallotHL2004}}] \label{ball_loc}
 Let $B(p, r)$ be a ball centered at $p$ with radius $r$ in an $n$-dimensional Riemannian manifold $(M, g)$. As the radius $r$ approaches zero, the volume of $B(p, r)$ satisfies
 \begin{equation} \label{ball}
  \vol_{g} (B(p, r)) = r^n \omega_n \left[ 1 -  \frac{\text{Sc}_p}{6 (n + 2)} r^2 + o(r^3)  \right] , 
 \end{equation} 
 where $\omega_n$ is the volume of the unit ball in Euclidean $n$-space. 
\end{theorem}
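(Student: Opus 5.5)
This is the classical small-radius expansion of the volume of a geodesic ball. The plan is to compute it in normal coordinates at $p$ by expanding the Riemannian volume density.

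\textbf{Step 1: reduce to integrating the volume density.} For $r$ below the injectivity radius at $p$, the exponential map $\exp_p$ is a diffeomorphism from the Euclidean ball of radius $r$ in $T_pM$ onto $B(p,r)$. In polar coordinates $x = s\,v$, with $v\in S^{n-1}$ and $0\le s<r$, this gives
\[ \vol_g(B(p,r)) = \int_{S^{n-1}}\int_0^r \theta(s,v)\, s^{n-1}\, ds\, dv, \]
where $\theta(s,v)=\sqrt{\det g_{ij}(sv)}$ is the density of $g$ in normal coordinates.

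\textbf{Step 2: expand the density.} The standard Taylor expansion of the metric in normal coordinates is
\[ g_{ij}(x)=\delta_{ij}-\tfrac13 R_{ikjl}x^kx^l+O(|x|^3). \]
It follows from the Jacobi field equation: along the geodesic $s\mapsto \exp_p(sv)$, the Jacobi fields with $J(0)=0$ satisfy $J(s)=s\,e-\tfrac{s^3}{6}R(e,v)v+O(s^4)$. Taking the determinant yields
\[ \theta(s,v)=1-\tfrac{s^2}{6}\operatorname{Ric}_p(v,v)+O(s^3). \]
The remainder is uniform in $v$ because $M$ is smooth and the sphere $S^{n-1}$ is compact.

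\textbf{Step 3: integrate.} Averaging over the unit sphere uses
\[ \int_{S^{n-1}}\operatorname{Ric}_p(v,v)\,dv=\tfrac{\sigma_{n-1}}{n}\,\mathrm{Sc}_p, \]
which holds since $\int_{S^{n-1}} v^iv^j\,dv=\tfrac{\sigma_{n-1}}{n}\delta^{ij}$. Integrating in $s$ then gives
\[ \vol_g(B(p,r)) = \sigma_{n-1}\tfrac{r^n}{n}-\tfrac{\sigma_{n-1}}{6n}\,\mathrm{Sc}_p\,\tfrac{r^{n+2}}{n+2}+O(r^{n+3}). \]
Since $\omega_n=\sigma_{n-1}/n$, this is exactly the claimed expansion, with remainder $r^n\cdot O(r^3)$. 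The $o(r^3)$ form in the statement follows a fortiori, and is in fact sharper than needed, since the next term is of order $r^4$ by parity of odd moments.

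\textbf{Main obstacle.} The only delicate step is the second-order expansion of $\theta$, namely getting the coefficient $-\tfrac16\operatorname{Ric}$. I would derive it carefully from the Jacobi equation $J''+R(J,\dot\gamma)\dot\gamma=0$ expanded to third order, then use $\det(I+A)=1+\operatorname{tr}A+O(|A|^2)$. Since the paper cites this as a known result from Gallot--Hulin--Lafontaine, invoking that reference for Step 2 is also acceptable.
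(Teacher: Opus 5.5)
Your Steps~1--3 are the standard normal-coordinates computation. The paper itself gives no proof; it only cites Gallot--Hulin--Lafontaine, whose argument is essentially this one. The steps are correct as far as they go, and they establish
\[ \vol_g(B(p,r)) = \omega_n r^n \left[ 1 - \frac{\mathrm{Sc}_p}{6(n+2)}\, r^2 + O(r^3) \right]. \]
The gap is in your final sentence. The statement asks for an $o(r^3)$ error, which is \emph{stronger} than $O(r^3)$. So it does not follow ``a fortiori'' from what you proved; the implication runs the other way. Your parity remark is the right idea, but it cannot be run on the expansion of Step~2 as written. A bare $O(s^3)$ remainder in $\theta(s,v)$ says nothing about how the $s^3$ coefficient depends on $v$, and from that information alone you only get an $O(r^3)$ error in the bracket.

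The repair is short. Write $\theta(s,v) = \Theta(sv)$, where $\Theta = \sqrt{\det(g_{ij})}$ is a smooth function on a neighbourhood of $0$ in $T_pM$ (normal coordinates). Taylor's theorem gives $\Theta(x) = 1 + Q_2(x) + Q_3(x) + O(|x|^4)$, with $Q_k$ homogeneous of degree $k$ and the remainder uniform near $0$. Step~2 identifies $Q_2(v) = -\tfrac16 \mathrm{Ric}_p(v,v)$. For $Q_3$ you need no formula (it is $-\tfrac{1}{12}(\nabla_v \mathrm{Ric})_p(v,v)$), only the oddness $Q_3(-v) = -Q_3(v)$, which gives $\int_{S^{n-1}} Q_3(v)\,dv = 0$. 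Hence the term of order $r^{n+3}$ drops out and the remainder is $O(r^{n+4}) = r^n \cdot o(r^3)$, which is exactly the stated form. For the paper's only use of this theorem, namely $\lim_{r\to 0} \dens_p(r) = \omega_n$ in Lemma~\ref{admissible}, the leading term from your Step~1 already suffices.
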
 

The following lemma concerns the growth of the volume of balls in a Riemannian manifold. It is a slight modification of \cite[Lemma 1]{Guth2011}.
\begin{lemma} \label{admissible}
 Let $p \in M$ be any point, and let $R_0 > 0$ be a fixed constant. 
Then for any $\theta > 0$, there exists at least one $R \in (0, R_0]$, such that 
 \begin{equation}\label{ball_growth}
  \vol_{g} (B(p, 5 R)) \leq 5^{n+\theta} \vol_{g} ( B(p, R) ) .
 \end{equation}
\end{lemma}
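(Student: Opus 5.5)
We argue by contradiction, using the small-radius volume expansion of Theorem~\ref{ball_loc}. Fix $p \in M$, $R_0>0$ and $\theta>0$, and suppose that (\ref{ball_growth}) fails for every $R \in (0, R_0]$, that is,
\[ \vol_g(B(p,5R)) > 5^{n+\theta} \vol_g(B(p,R)) \quad \text{for all } R \in (0,R_0]. \]
We feed this inequality the geometric sequence of radii $R_m = 5^{-m}R_0$, $m = 0,1,2,\dots$. Each application shrinks the radius by a factor $5$, so iterating along this sequence and telescoping gives, for every $m \ge 1$,
\[ \vol_g(B(p,5R_0)) > 5^{m(n+\theta)} \vol_g(B(p,5^{-m+1}R_0)). \]

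Next we bound the right-hand side from below. Theorem~\ref{ball_loc} gives $\vol_g(B(p,r)) = \omega_n r^n(1+o(1))$ as $r\to 0$. Hence for all sufficiently large $m$ we have
\[ \vol_g(B(p,5^{-m+1}R_0)) \geq \tfrac12 \omega_n 5^{n(1-m)} R_0^n. \]
Substituting this into the telescoped inequality yields
\[ \vol_g(B(p,5R_0)) > \tfrac12 \omega_n 5^{n} R_0^n \, 5^{m\theta}. \]

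The left-hand side is a fixed finite number: $B(p,5R_0)$ is relatively compact when $M$ is closed, or complete, as in the appendix setting. The right-hand side, however, tends to $\infty$ as $m\to\infty$ because $\theta>0$. This is a contradiction, so some $R_m \in (0,R_0]$ satisfies (\ref{ball_growth}), and in fact the argument shows that one of the radii $5^{-m}R_0$ works.

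The only delicate point is the use of the asymptotic expansion. We need just the leading-order statement $\vol_g(B(p,r))/(\omega_n r^n)\to 1$; no uniformity in $p$ is required, since $p$ is fixed throughout. The argument also needs the volume of $B(p,5R_0)$ to be finite. As an alternative to Theorem~\ref{ball_loc}, Croke's inequality (\ref{Croke}) supplies the same type of lower bound $\beta_n r^n$ for $r \le \frac12\inj(M,g)$ whenever the injectivity radius is positive.
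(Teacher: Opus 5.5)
Your proof is correct and takes essentially the same approach as the paper: assume the inequality fails on $(0,R_0]$, iterate along the radii $5^{-m}R_0$, and contradict using $\vol_g(B(p,r))\sim\omega_n r^n$ as $r\to 0$. The paper phrases the same telescoping through the density $\vol_g(B(p,r))/r^n$. Your explicit check that $\vol_g(B(p,5R_0))$ is finite is a point the paper leaves implicit.
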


\begin{proof}
 We define the density function of radius $r$ at $p \in M$ as
 \begin{equation} \label{defn:density}
  \dens_p ( r ) =  \frac{ \vol_{g} (B(p, r)) }{r^n} . 
 \end{equation}
According to Theorem \ref{ball_loc}, we have 
 \begin{equation}\label{limit_density}
  \lim_{r \to 0} \dens_p (r) = \omega_n .
 \end{equation}
If we assume that (\ref{ball_growth}) is not true, then 
 \begin{align*}
  \vol_{g} (B(p, 5R)) > 5^{n+\theta} \vol_{g} ( B(p, R) ) 
 \end{align*}
 holds for all $0< R \leq R_0$, which implies
 \begin{equation*}
  \dens_p (5R) > 5^{\theta} \dens_p (R) .
 \end{equation*}
 Iterating this inequality yields:
 \begin{align*}
  \dens_p (5R_0) & > 5^{\theta} \dens_p (R_0) \\ 
   & > 5^{2 \theta} \dens_p (5^{-1} R_0 ) \\
   & \hspace{20pt} \vdots \\
   & > 5^{ \ell \ts \theta } \dens_p ( 5^{-\ell + 1} R_0 ) .
 \end{align*}
 However, this leads to a contradiction. As $\ell \to \infty$, identity (\ref{limit_density}) implies that the right-hand side of 
 \begin{equation} \label{equ:appendix_Lem2_5R0}
  \dens_p(5 \tts R_0)  > 5^{\ell \tts \theta} \dens_p ( 5^{- \ell + 1} R_0 ) ,
 \end{equation}
 tends to infinity.
\end{proof}

We next show that, for any $\theta > 0$ and any fixed constant $R_0 > 0$, there exists a unique $\theta$-admissible ball $B(p, R)$ centered at $p$ with radius $R \leq R_0$.
  
\begin{proposition} \label{prop:admissible_existence}
 For any point $p$ in an $n$-dimensional Riemannian manifold $(M, g)$, there exists a unique $\theta$-admissible metric ball $B(p, R)$.  
\end{proposition}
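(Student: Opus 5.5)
We argue through the set of "good" radii. Define
\[
 S = \{ R \in (0, R_0] : \vol_{g}(B(p, 5R)) \leq 5^{n+\theta} \vol_{g}(B(p, R)) \}.
\]
By Lemma~\ref{admissible}, $S$ is nonempty. Let $R^{*} = \sup S \in (0, R_0]$. The candidate admissible ball is $B(p, R^{*})$. Condition (2) of the definition, for $R^{*} < R^{\prime} \le R_0$, holds automatically because every such $R^{\prime}$ lies outside $S$. If $R^{*} = R_0$, we also need $R_0 \in S$; if $R^{*} < R_0$, we need condition (1) at $R^{*}$. In both cases what must be shown is that $R^{*} \in S$, i.e.\ that $S$ contains its supremum.

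To prove this, take $R_m \in S$ with $R_m \nearrow R^{*}$. The function $r \mapsto \vol_g(B(p,r))$ for open balls is left-continuous, since $B(p,r) = \bigcup_{s<r} B(p,s)$ and volume is continuous from below. Hence $\vol_g(B(p,R_m)) \to \vol_g(B(p,R^{*}))$ and $\vol_g(B(p,5R_m)) \to \vol_g(B(p,5R^{*}))$. Passing to the limit in
\[
 \vol_g(B(p,5R_m)) \le 5^{n+\theta}\vol_g(B(p,R_m))
\]
gives the inequality at $R^{*}$. (If the supremum is attained, there is nothing to prove.) This limit argument is the one delicate point: it depends on the balls being open, because volume can jump as $r$ increases, for instance when a sphere of positive measure is absorbed. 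With open balls, left-continuity is exactly the one-sided continuity needed for an increasing approximating sequence.

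For uniqueness, suppose $B(p,R)$ and $B(p,R')$ are both $\theta$-admissible with $R < R'$. Then $R < R_0$, so condition (2) for $B(p,R)$ applies to $R'$ and gives
\[
 \vol_g(B(p,5R')) > 5^{n+\theta}\vol_g(B(p,R')).
\]
But $B(p,R')$ being admissible, whether $R' = R_0$ or $R' < R_0$, gives the opposite non-strict inequality at $R'$. This is a contradiction, so $R = R'$. The same reasoning shows that the admissible radius is necessarily $\sup S$, which identifies it concretely for later use in Lemma~\ref{lem:admissible_prop}.
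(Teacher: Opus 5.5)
Your proof is correct and follows essentially the same route as the paper: take the supremum of the set of radii where $\vol_g(B(p,5r)) \le 5^{n+\theta}\vol_g(B(p,r))$, show by a limit argument that this supremum is attained, get condition (2) from maximality, and get uniqueness by contradicting condition (2). The only difference is that you justify the limit step with left-continuity of open-ball volumes, which is all that is needed, whereas the paper asserts full continuity of $r \mapsto \vol_g(B(p,r))$; your worry about volume jumping does not arise here, since metric spheres in a Riemannian manifold have measure zero.
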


\begin{proof}
 Let
  \begin{equation} \label{equ:admi_defn}
   R = \sup_{r} \{ \left. 0 < r \leq R_0 \right| \vol_{g}(B(p, 5r)) \leq 5^{n + \theta} \vol_{g} (B(p, r)) \}.
  \end{equation}
Lemma~\ref{admissible} implies that the supremum $R$ exists, and $R > 0$. Then $R \in (0, R_0]$. Next we prove $B(p, R)$ is a $\theta$-admissible ball.  
Since the function 
 \[ r \mapsto \vol_g(B(p, 5r)) - 5^{n+\theta} \vol_g (B(p, r)) \]
is continuous, a sequence $r_i$ in the set
 \[  \{ 0 < r \leq R_0 : \vol_g(B(p, 5r)) \leq 5^{n+\theta} \vol_g(B(p, r)) \} \]
with $r_i \to R$ gives
 \[ \vol_g(B(p, 5R)) \leq 5^{n+\theta} \vol_g(B(p, R)). \]
If $R < R_0$, then we have
 \[ \vol_g(B(p, 5R^{\prime})) > 5^{n+\theta} \vol_g(B(p, R^{\prime})) \] 
for any $R < R^{\prime} \leq R_0$. Therefore, according to the definition, $B(p, R)$ is an admissible ball. 

The maximality property in the definition of $R$ also proves the uniqueness. Now suppose $\tilde{R}$ is another radius of an admissible ball centered at $p$. If $\tilde{R} = R_0$, then we must have $R = R_0 = \tilde{R}$ since $R$ is the supremum of all the radii satisfying the first admissibility condition.  
We may therefore assume that $\tilde{R} < R_0$. 
Since $\tilde{R}$ satisfies the first admissibility condition 
 \[ \vol_g(B(p, 5\tilde{R})) \leq 5^{n+\theta} \vol_g (B(p, \tilde{R})), \]
we have $\tilde{R} \leq R$. If $\tilde{R} < R$, then the second admissibility condition for $\tilde{R}$, evaluated at $R$, gives
 \[ \vol_g(B(p, 5R)) > 5^{n+\theta} \vol_g(B(p, R)), \]
contradicting the fact that $R$ satisfies the first admissibility condition. Hence we must have $\tilde{R} = R$.   
\end{proof}

Define $R: M \to (0, R_0]$ by assigning $R(p)$ the unique radius of $\theta$-admissible balls centered at $p \in M$. 
\begin{lemma} \label{lem:admissible_prop}
 Let $M$ be a closed Riemannian manifold. Then
 \begin{enumerate}
  \item $R$ is upper semicontinuous.
  \item $R$ has a positive lower bound on $M$.
 \end{enumerate}
\end{lemma}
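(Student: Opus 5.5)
Both parts rest on continuity of $r \mapsto \vol_g(B(p,r))$ and of $p \mapsto \vol_g(B(p,r))$, together with the reformulation of $R(p)$ as the supremum in (\ref{equ:admi_defn}). Set
\[ F(p,r) = \vol_g(B(p,5r)) - 5^{n+\theta}\vol_g(B(p,r)). \]
Then $R(p) = \sup\{ 0<r\le R_0 : F(p,r)\le 0\}$, and by Proposition~\ref{prop:admissible_existence} the supremum is attained, so $F(p,R(p))\le 0$. On a closed manifold, spheres have measure zero, so $\vol_g(B(p,r))$ is jointly continuous in $(p,r)$. Indeed, $|\vol_g(B(p,r))-\vol_g(B(q,r))|\le \vol_g(B(p,r+d)\setminus B(p,r-d))$ with $d=\dist_g(p,q)$, and this tends to $0$ as $d\to 0$ by continuity in $r$. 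Hence $F$ is continuous on $M\times(0,R_0]$.

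\textbf{Upper semicontinuity.} Let $p_i\to p$ with $\limsup R(p_i)=\bar R$; pass to a subsequence with $R(p_i)\to \bar R\in[0,R_0]$. Assume first that $\bar R>0$. Then $F(p_i,R(p_i))\le 0$ and joint continuity give $F(p,\bar R)\le 0$. So $\bar R$ belongs to the set whose supremum is $R(p)$, and $\bar R\le R(p)$. If $\bar R=0$, the inequality $\bar R\le R(p)$ holds trivially. Thus $\limsup_{q\to p}R(q)\le R(p)$.

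\textbf{Positive lower bound.} This is the main obstacle, because upper semicontinuity alone does not give a positive minimum. I would make Lemma~\ref{admissible} uniform in $p$. The quantity to control is
\[ \dens_p(r)=\vol_g(B(p,r))/r^n. \]
On a compact manifold it is bounded uniformly, with $a\le \dens_p(r)\le b$ for all $p\in M$ and $0<r\le 5R_0$. For the lower bound $a$, one can use Croke's inequality (\ref{Croke}) for $r\le \frac12\inj(M,g)$, and compactness of $M\times[\frac12\inj(M,g),5R_0]$ for larger $r$. For the upper bound $b$, use a lower bound on the Ricci curvature and Bishop–Gromov.

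Now suppose $F(p,r)>0$ for all $r\in[5^{-\ell}R_0,R_0]$ at some point $p$. Iterating as in the proof of Lemma~\ref{admissible} gives
\[ b\ge \dens_p(5R_0)>5^{(\ell+1)\theta}\dens_p(5^{-\ell}R_0)\ge 5^{(\ell+1)\theta}a. \]
This fails once $\ell\ge \ell_0$, where $\ell_0$ depends only on $a$, $b$ and $\theta$. So for every $p$ some radius $r\ge 5^{-\ell_0}R_0$ satisfies $F(p,r)\le 0$. Therefore $R(p)\ge 5^{-\ell_0}R_0>0$ uniformly in $p$.

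The delicate point in this step is the iteration: it must use the discrete chain $r=5^{-j}R_0$, $j=0,\dots,\ell$, so that only finitely many scales are involved and the uniform density bounds apply at each of them.
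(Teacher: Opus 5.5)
Your proof is correct and is essentially the paper's. Part (1) is the same continuity argument for $F$, and you additionally justify the joint continuity of $(p,r)\mapsto\vol_g(B(p,r))$, which the paper only asserts. Part (2) uses the same ingredients: an upper bound on $\dens_p(5R_0)$, Croke's inequality at small scales, and iteration of the second admissibility condition along the scales $5^{-j}R_0$. You run them directly to get the explicit bound $R(p)\ge 5^{-\ell_0}R_0$, where the paper argues by contradiction along a sequence with $R(p_i)\to 0$. Bishop--Gromov is not needed: you only use the upper density bound at $r=5R_0$, where $\dens_p(5R_0)\le \vol_g(M)/(5R_0)^n$ holds trivially.
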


\begin{proof}
 \begin{enumerate}
  \item
To prove that the function $R$ is upper semicontinuous, we define 
 \[ F(p, r) = \vol_g(B(p, 5r)) - 5^{n+\theta} \vol_g(B(p, r)). \]
Then the function $F$ is continuous in variables $(p, r)$. Moreover, by Proposition~\ref{prop:admissible_existence},
 \[ R(p) = \sup \{ r \in (0, R_0] : F(p, r) \leq 0 \}.  \]
Suppose $p_m \to p$, and pass to a subsequence such that
 \[ R(p_m) \to L = \limsup_{m \to \infty} R(p_m). \]
If $L = 0$, then $L \leq R(p)$ is immediate. Now suppose $L > 0$.  
Since $F(p_m, R(p_m)) \leq 0$, the continuity implies $F(p, L)\leq 0$. Hence we have $ L \leq R(p)$.
Therefore, 
 \[ \limsup_{m \to \infty} R(p_m) \leq R(p), \]
and this gives the conclusion that the function $R$ is upper semicontinuous.   

  \item Note that the volume function $\mathrm{V}: M \to \R$ of balls defined by 
  \begin{equation} \label{equ:vol_ball}
    \mathrm{V}(p) = \vol_g(B(p, 5R_0)) 
  \end{equation}
 is continuous. Hence if we suppose the infimum of the radii of $\theta$-admissible balls is zero, then there exists a sequence $\{ p_i \in M | i = 1, 2, \cdots  \}$ of points in $M$, such that the radius $R_i$ of $\theta$-admissible balls at $p_i$ satisfies
 \[ \lim_{i \to \infty} R_i = 0. \]
 Choose $k(i) \in \mathbb{N}$ such that $R_i < 5^{-k(i)}R_0$. Since $R_i \to 0$, we may further assume $k(i) \to \infty$.  
Now consider the density function (\ref{defn:density}) defined in the proof of Lemma~\ref{admissible}. Since every radius $R_0, 5^{-1}R_0, \cdots , 5^{-k(i)}R_0 $ exceeds $R_i$, iterating the second admissibility condition gives 
  \begin{equation} \label{ineq:radius_admissible}
   \dens_{p_i}(5R_0) >  5^{( k(i) + 1) \, \theta} \dens_{p_i}(5^{-k(i)}R_0)
  \end{equation}
 holds. Here $k(i) \to \infty$ when $i \to \infty$. However, the function
  \[ p \mapsto \dens_{p}(5R_0) \] 
 has a maximum value on $M$, since the volume function $\mathrm{V}$ defined in (\ref{equ:vol_ball}) is continuous. On the other hand, for sufficiently large $i$, the radius $5^{-k(i)}R_0$ is at most $\inj(M, g) / 2$. Then Croke's local embolic inequality (\ref{Croke}) gives
  \[ \dens_{p_i}(5^{-k(i)}R_0) \geq \beta_n. \] 
Therefore the left-hand side of the inequality (\ref{ineq:radius_admissible}) has an upper bound, but the right-hand side of (\ref{ineq:radius_admissible}) tends to infinity, resulting in a contradiction.  
 
 \end{enumerate} 
\end{proof}

\bibliographystyle{amsalpha}
\bibliography{references}

@book {Berger2003,
    AUTHOR = {Berger, Marcel},
     TITLE = {A panoramic view of {R}iemannian geometry},
 PUBLISHER = {Springer-Verlag, Berlin},
      YEAR = {2003},
     PAGES = {xxiv+824},
      ISBN = {3-540-65317-1},
   MRCLASS = {53-02 (53C20 53C21 53C55 58J20)},
  MRNUMBER = {2002701},
MRREVIEWER = {J\"{u}rgen Eichhorn},
       DOI = {10.1007/978-3-642-18245-7},
       URL = {https://doi.org/10.1007/978-3-642-18245-7},
}

@misc{Chen2019,
      title={Covering trick and embolic volume}, 
      author={Chen, Lizhi},
      year={2019},
      note = {arXiv:1911.00691},
      eprint={},
      archivePrefix={},
      primaryClass={}, 
}

@article {Croke1980,
    AUTHOR = {Croke, Christopher B.},
     TITLE = {Some isoperimetric inequalities and eigenvalue estimates},
   JOURNAL = {Ann. Sci. \'{E}cole Norm. Sup. (4)},
  FJOURNAL = {Annales Scientifiques de l'\'{E}cole Normale Sup\'{e}rieure. Quatri\`eme
              S\'{e}rie},
    VOLUME = {13},
      YEAR = {1980},
    NUMBER = {4},
     PAGES = {419--435},
      ISSN = {0012-9593},
   MRCLASS = {58G25 (35P15 49G05 53C65)},
  MRNUMBER = {608287},
MRREVIEWER = {A. G. Ramm},
       URL = {http://www.numdam.org/item?id=ASENS_1980_4_13_4_419_0},
}

@incollection {crokeK2002_universal,
    AUTHOR = {Croke, Christopher B. and Katz, Mikhail},
     TITLE = {Universal volume bounds in {R}iemannian manifolds},
 BOOKTITLE = {Surveys in differential geometry, {V}ol.\ {VIII} ({B}oston,
              {MA}, 2002)},
    SERIES = {Surv. Differ. Geom.},
    VOLUME = {8},
     PAGES = {109--137},
 PUBLISHER = {Int. Press, Somerville, MA},
      YEAR = {2003},
      ISBN = {1-57146-114-0},
   MRCLASS = {53C23 (53C20 53C21 53C22)},
  MRNUMBER = {2039987},
MRREVIEWER = {Andrea\ Sambusetti},
       DOI = {10.4310/SDG.2003.v8.n1.a4},
       URL = {https://doi.org/10.4310/SDG.2003.v8.n1.a4},
}

@book {Dugundji1966,
    AUTHOR = {Dugundji, James},
     TITLE = {Topology},
 PUBLISHER = {Allyn and Bacon, Inc., Boston, MA},
      YEAR = {1966},
     PAGES = {xvi+447},
   MRCLASS = {54.00},
  MRNUMBER = {193606},
MRREVIEWER = {M.\ Edelstein},
}

@book {GallotHL2004,
    AUTHOR = {Gallot, Sylvestre and Hulin, Dominique and Lafontaine,
              Jacques},
     TITLE = {Riemannian geometry},
    SERIES = {Universitext},
   EDITION = {Third},
 PUBLISHER = {Springer-Verlag, Berlin},
      YEAR = {2004},
     PAGES = {xvi+322},
      ISBN = {3-540-20493-8},
   MRCLASS = {53-01 (53C20 53C21 53C23)},
  MRNUMBER = {2088027},
MRREVIEWER = {Joseph\ E.\ Borzellino},
       DOI = {10.1007/978-3-642-18855-8},
       URL = {https://doi.org/10.1007/978-3-642-18855-8},
}

@article {Gromov1981,
    AUTHOR = {Gromov, Michael},
     TITLE = {Curvature, diameter and {B}etti numbers},
   JOURNAL = {Comment. Math. Helv.},
  FJOURNAL = {Commentarii Mathematici Helvetici},
    VOLUME = {56},
      YEAR = {1981},
    NUMBER = {2},
     PAGES = {179--195},
      ISSN = {0010-2571,1420-8946},
   MRCLASS = {53C20 (57R15)},
  MRNUMBER = {630949},
MRREVIEWER = {Karsten\ Grove},
       DOI = {10.1007/BF02566208},
       URL = {https://doi.org/10.1007/BF02566208},
}

@article{gromov_bounded_cohomology_1982,
  title={Volume and bounded cohomology},
  author={Gromov, Michael},
  journal={Publications Math{\'e}matiques de l'IH{\'E}S},
  volume={56},
  pages={5--99},
  year={1982}
}

@article {Gromov1983,
    AUTHOR = {Gromov, Michael},
     TITLE = {Filling {R}iemannian manifolds},
   JOURNAL = {J. Differential Geom.},
  FJOURNAL = {Journal of Differential Geometry},
    VOLUME = {18},
      YEAR = {1983},
    NUMBER = {1},
     PAGES = {1--147},
      ISSN = {0022-040X},
   MRCLASS = {53C20 (53C21 57R99)},
  MRNUMBER = {697984},
MRREVIEWER = {Yu. Burago},
       URL = {http://projecteuclid.org/euclid.jdg/1214509283},
}

@article {Guth2011,
    AUTHOR = {Guth, Larry},
     TITLE = {Volumes of balls in large {R}iemannian manifolds},
   JOURNAL = {Ann. of Math. (2)},
  FJOURNAL = {Annals of Mathematics. Second Series},
    VOLUME = {173},
      YEAR = {2011},
    NUMBER = {1},
     PAGES = {51--76},
      ISSN = {0003-486X,1939-8980},
   MRCLASS = {53C23},
  MRNUMBER = {2753599},
MRREVIEWER = {St\'ephane\ Sabourau},
       DOI = {10.4007/annals.2011.173.1.2},
       URL = {https://doi.org/10.4007/annals.2011.173.1.2},
}

@misc{Harpe2017,
author        = {Pierre de la Harpe},
title         = {Brouwer Degree, Domination of Manifolds and Groups Presentable by Products},
year          = {2016},
howpublished = {\url{http://www.map.mpim-bonn.mpg.de/images/c/cf/Brouwer_degree.pdf}},
note          = {Bulletin of the Manifold Atlas. Accessed on June 21, 2026},
eprint        = {1609.06637},
archivePrefix = {arXiv},
url           = {http://www.map.mpim-bonn.mpg.de/images/c/cf/Brouwer_degree.pdf},
}

@book {Hatcher2002,
    AUTHOR = {Hatcher, Allen},
     TITLE = {Algebraic topology},
 PUBLISHER = {Cambridge University Press, Cambridge},
      YEAR = {2002},
     PAGES = {xii+544},
      ISBN = {0-521-79160-X; 0-521-79540-0},
   MRCLASS = {55-01 (55-00)},
  MRNUMBER = {1867354},
MRREVIEWER = {Donald\ W.\ Kahn},
}

@article {Weiss1996,
    AUTHOR = {Weiss, Michael},
     TITLE = {Curvature and finite domination},
   JOURNAL = {Proc. Amer. Math. Soc.},
  FJOURNAL = {Proceedings of the American Mathematical Society},
    VOLUME = {124},
      YEAR = {1996},
    NUMBER = {2},
     PAGES = {615--622},
      ISSN = {0002-9939,1088-6826},
   MRCLASS = {53C23 (53C21 55P99 57R19)},
  MRNUMBER = {1291795},
MRREVIEWER = {Joseph\ E.\ Borzellino},
       DOI = {10.1090/S0002-9939-96-03056-0},
       URL = {https://doi.org/10.1090/S0002-9939-96-03056-0},
}

@article {Whitehead1948,
    AUTHOR = {Whitehead, J. H. C.},
     TITLE = {On the homotopy type of {ANR}'s},
   JOURNAL = {Bull. Amer. Math. Soc.},
  FJOURNAL = {Bulletin of the American Mathematical Society},
    VOLUME = {54},
      YEAR = {1948},
     PAGES = {1133--1145},
      ISSN = {0002-9904},
   MRCLASS = {56.0X},
  MRNUMBER = {29504},
MRREVIEWER = {R.\ H.\ Fox},
       DOI = {10.1090/S0002-9904-1948-09140-6},
       URL = {https://doi.org/10.1090/S0002-9904-1948-09140-6},
}

%\begin{thebibliography}{99}
% \bibitem{Chen2019} L. Chen, Covering trick and embolic volume, arXiv:1911.00691 (2019).  
% \bibitem{Berger2003} M. Berger, A Panoramic View of Riemannian Geometry, Springer, Berlin, Heidelberg, 2003. 
% \bibitem{Croke1980} C. Croke, Some isoperimetric inequalities and eigenvalue estimates, Ann. Sci. \`Ecole Norm. Sup. (4) \textbf{13} (1980), no. 4, 419--435.
% \bibitem{Dugundji1966} J. Dugundji, Topology, Allyn and Bacon, 1966. 
% \bibitem{Gromov1981} M. Gromov, Curvature, diameter and Betti numbers, Comm. Math. Helv. \textbf{56} (1981), 179--195. 
% \bibitem{Gromov1983} M. Gromov, Filling Riemannian manifolds, J. Differential Geom. \textbf{18} (1983), no. 1, 1--147.   
%\bibitem{Guth2011} L. Guth, Volume of balls in large Riemannian manifolds, Ann. Math. (2) \textbf{173} (2011), 51--76. 
%\bibitem{Harpe2017} P. de la Harpe, Brouwer degree, domination of manifolds and groups presentable by products, Bull. Manifold Atlas (2017), \url{http://www.map.mpim-bonn.mpg.de/Main_Page} .
%\bibitem{Hatcher2002} A. Hatcher, Algebraic Topology, Cambridge University Press, Cambridge, UK, 2002. 
% \bibitem{Weiss1996} M. Weiss, Curvature and finite domination, Proc. Amer. Math. Soc. \textbf{124} (1996), no. 2, 615--622.
% \bibitem{Whitehead1948} J. Whitehead, On the homotopy type of ANR, Bull. AMS \textbf{54} (1948), 1133--1145.
%\end{thebibliography}
 
\end{document}